\let\csname equation*\endcsname\relax
\let\csname endequation*\endcsname\relax
\newtheorem{thm}{Theorem}
\newtheorem{cor}{Corollary}
\newtheorem{lemma}{Lemma}
\newcommand{\PP}{ \mathbf{Pr} }
\newcommand{\EE}{ \mathbb{E} }
\newcommand{\dt}{\delta t}
\newcommand{\ra}{\rightarrow}
\newcommand{\bv}[1]{\boldsymbol{\mathbf{#1}}}
\providecommand{\keywords}[1]{\qquad \quad \ \ \ \textbf{Keywords:} #1}
\begin{document}
\title[Memorised Random Walks]{The shape of a memorised random walk}

\author{Micha\l\  Gnacik, Abdulrahman Alsolami, and James Burridge}

\address{School of Mathematics and Physics, University of Portsmouth, Portsmouth, \\ PO1 3HF, United Kingdom}
\ead{\href{mailto:james.burridge@port.ac.uk}{\nolinkurl{james.burridge@port.ac.uk}},\href{mailto:michal.gnacik@port.ac.uk}{\nolinkurl{michal.gnacik@port.ac.uk}} }
\vspace{10pt}
\begin{indented}
\item[]August 2018
\end{indented}
\begin{indented}
	\item[] Published in \emph{J. Stat. Mech.} (2018) 083207 \\ \url{http://stacks.iop.org/1742-5468/2018/i=8/a=083207}
\end{indented}
\begin{abstract}
%Animals 
	We view random walks as the paths of foraging animals, perhaps searching for food or avoiding predators while forming a mental map of their surroundings. The formation of such maps requires them to memorise the locations they have visited. We model memory using a kernel, proportional to the number of locations recalled as a function of the time since they were first observed. We give exact analytic expressions relating the elongation of the memorised walk to the structure of the memory kernel, and confirm these by simulation. We find that more slowly decaying memories lead to less elongated mental maps. 
\end{abstract}

%
% Uncomment for keywords
\vspace{2pc}
\keywords{Brownian motion, diffusion, exact results, stochastic processes}
%
% Uncomment for Submitted to journal title message
%\submitto{\JPA}
%
% Uncomment if a separate title page is required
%\maketitle
% 
% For two-column output uncomment the next line and choose [10pt] rather than [12pt] in the \documentclass declaration
%\ioptwocol

%\renewcommand\refname{Reference}
\newpage
\tableofcontents

\section{Introduction}

Patterns of animal movement and the evolutionary forces which have given rise to them have long been of interest to biologists \cite{kar83,fag13,fag14}. The trajectories of many creatures are reminiscent of random walks \cite{tur15, cod08}, suggesting a link to statistical physics. In fact, collaboration between the two disciplines goes back many decades: insect movement patterns were first studied using correlated random walks in 1983 \cite{kar83}. Since then, the increasing availability of animal trajectory data \cite{col06} has motivated a great deal of modelling work by ecologists and physicists. More exotic classes of random walk have been considered as candidates for animal foraging trajectories. For example L\'{e}vy walks \cite{rap03,jam11,vis08,vis99}, have been demonstrated to be an optimal search strategy in certain environmental contexts \cite{hum10}. %{\color{red}Also random-walk models for frugal \cite{beni18} and greedy \cite{bhat17} foragers have been investigated.}
Modelling trajectories using Markov (memoryless) random processes \cite{law06} brings the advantage of mathematical tractability, but when movement patterns are guided by memory then more sophisticated processes emerge. For example some species, such as bees \cite{tho96} and hummingbirds \cite{gill88}, follow repeated foraging routes through their territory, known as traplines \cite{oha05}. Other species, such as humans and monkeys, make repeated long distance relocations to previously visited sites \cite{boy14,boy12}, and simple models of such behaviour exhibit slow diffusion and scale free occupation patterns. From an evolutionary perspective, one must consider the costs and benefits of spatial memory when considering its impact on animal movement \cite{fag13}. The ability to recall hiding places and food caches, or to accurately navigate long distances, must be balanced against the physiological and metabolic cost of a larger brain \cite{duk99}, and the need for sleep \cite{rot10}. Field studies suggest that higher primates such as chimpanzees possess Euclidean mental maps of their environment, and are able to select the most direct route when returning to a distant food source, rather than relying on repeated paths or landmarks \cite{nor09}. The maintenance of such maps requires a spatial memory which must be reinforced and maintained. The decay of this memory may strongly effect movement patterns through a landscape \cite{avg13}. This relationship between memory, its rate of decay, and the shape of a mental map is the focus of our paper. Using methods originally devised to characterise the shapes of long chain polymer molecules \cite{rud86}, we relate the \textit{asphericity} (elongation) of the geographical region recalled by a random walking forager, to the process by which its memory decays \cite{ave11}. If one views the trajectory of the forager as a search path, then such measures provide a description of the effective search area of the animal.  
We note that recent work \cite{cla15,gre17} on convex hulls of random walks (another measure of its shape), also has ecological relevance: to home ranges. As in that work, we take our trajectories to be Brownian paths \cite{oks10,men14}, and are able to derive closed form expressions for their asphericity. This forms an analytical first step towards understanding the relationship between the mental maps of foragers, and their search paths. 

%{\color{red}  
In the context of resource foraging, B\'enichou, Redner \emph{et al}  have analytically studied  non-Markovian processes, including variations of the ``starving random walk'' \cite{beni14} when the forager is frugal \cite{beni18} and greedy \cite{bhat17}.  Models developed by ecologists have investigated foraging with memory \cite{vin15, bracis15}. The results of these simulation studies, showed that  increasing memory capacity always improved the condition (health) of the forager \cite{vin15}, but that the advantage of memory depends on the nature of the landscape \cite{bracis15}: memory is most useful when resources are high value but more difficult to encounter. Our analytical result is not specific to one particular foraging scenario, but provides a general analytical relation which may be used as a reasoning tool in cases where memory influences fitness. For example when escaping from predators, or returning to food sources, the shape of a mental map may influence the chances of success.  
%}

% Most results involving memory models of foraging are based on computer simulations \cite{vin15, bracis15} the main benefit of our work comes from analytical calculations.
%{\color{red}Models for animal's territory often predict or assume that its optimal shape is round \cite{and78, dill78, hixon80, eason92}.}
\section{The model}

We consider a particle (\emph{e.g.} an animal, a forager) whose trajectory is a two-dimensional standard Brownian motion $(X(t),Y(t))_{t \geq 0}$. This animal's memory consists of a countable set of locations. We reverse the conventional direction of time so that increasing $t$ takes us further into the past, and $t=0$ is the present. Since Brownian motion is time reversal invariant, this has no affect on the statistical properties of the paths. For $0\leq t_1 < t_2$, let $A(t_1,t_2)$ be the event that the animal retains at least one of the locations it visited during the interval $[t_1,t_2]$, in particular, 
\begin{align}
\PP[A(t,t+\dt)] &= c \mu(t) \dt + o(\dt) \text{ as } \dt \ra 0,
\end{align}
where
$\mu$ is a non-increasing density function of a continuous probability distribution supported on $[0, \infty)$ with finite first moment.  Here, the function $\mu$ is referred to as the memory kernel of the forager, and the constant $c >0$ controls the overall rate at which memories are formed. In fact, $c$ gives the expected number of observations. Letting $S$ be the set of all times (excluding the current position) corresponding to the set, $L= \{ (X(s), Y(s)) \colon s \in S \}$, of locations in memory, we define the \textit{egocentric} \cite{kla98} gyration tensor
\begin{equation}
T = \begin{bmatrix} T_{11} & T_{12} \\ T_{12} & T_{22} \end{bmatrix} = \begin{bmatrix} \frac{1}{1+|S|} \sum_{t \in S}   X^2(t) & \frac{1}{1+|S|} \sum_{t \in S}  X(t)Y(t) \\
 \frac{1}{1+|S|} \sum_{t \in S}   X(t)Y(t) &  \frac{1}{1+|S|} \sum_{t \in S}   Y^2(t)
\end{bmatrix};
\label{eqn:egoTen}
\end{equation}
for technical reasons we have included the current location (at $t=0$) in the memory. Also note that $X(0)=Y(0)=0$.
The term \textit{egocentric} refers to the fact that the elements of the tensor are moments about the animal's \textit{current} location, rather than the centre of mass of all locations in memory. For simplicity, we assume that the animal's current location is the origin. Since $T$ is a positive definite  matrix, then there exists an angle $\theta \in [0,2 \pi)$ for which the rotation matrix $R(\theta)$ diagonalises $T$. We write the diagonalised matrix
\begin{equation}
\tilde{T} = R^{-1}(\theta) T R(\theta) = \begin{bmatrix}
\lambda_1 && 0 \\
0 && \lambda_2
\end{bmatrix}.
\end{equation} 
The eigenvalues $\lambda_1$ and $\lambda_2$, with $\lambda_1>\lambda_2$, give the mean square displacements of the set $L$ along the principle axes: the (orthogonal) eigenvectors of $T$. Letting $\bv{v} = \binom{x}{y}$ then the equation
\begin{equation}
\label{eqn:ellip}
\bv{v}^T  T^{-1} \bv{v}= \kappa^2
\end{equation}
describes an ellipse whose semi-major and semi-minor axes have lengths equal to $ \kappa \sqrt{\lambda_1}$ and $\kappa \sqrt{\lambda_2}$ respectively. Two examples of such ellipses are shown in Figure \ref{fig:ellipse}, along with the sets, $L$, of locations used to generate them.
\begin{figure}
	\includegraphics{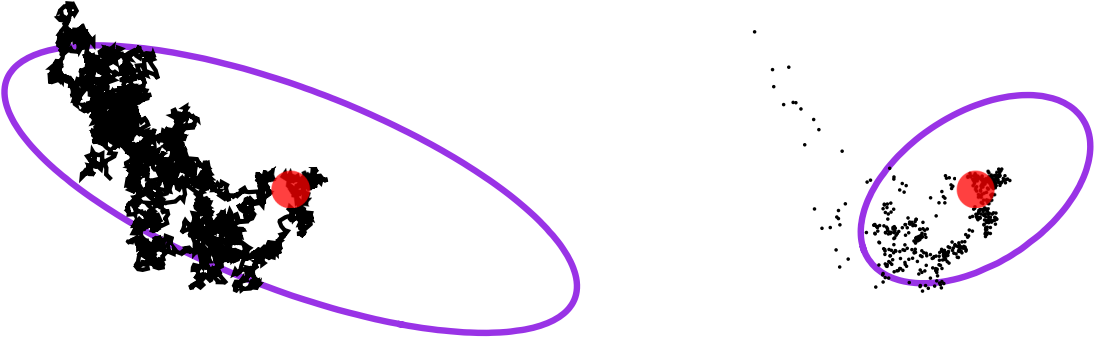}
	\caption{\label{fig:ellipse} Left: A Brownian path of duration $\tau=10$, the locations of which correspond to the memory kernel $\mu(t)=\frac{1}{\tau}\mathbf{1}_{\{t<\tau\}}$ with $c \ra \infty$. Right: The set, $L$, of locations memorised from this path when $\mu(t)=\tfrac{1}{1-e^{-\tau}}e^{-t} \mathbf{1}_{\{t<\tau\}}$ and $c=500$. The purple ellipses are given by equation (\ref{eqn:ellip}) with $\kappa=2$. The semi-transparent red dots give the current location of the walker (the origin). }
\end{figure}

In polymer physics, the shape of a molecule may be described by its \textit{standard} gyration tensor, whose elements are the moments of its mass distribution about its centre of mass \cite{rud86,rud87}. The extent to which such molecules are elongated may be measured by their asphericity, which in two dimensions is defined \cite{rud86}
\begin{equation}
A_2=\frac{\EE[(\lambda_1-\lambda_2)^2]}{\EE[(\lambda_1+\lambda_2)^2]},
\label{eqn:asp}
\end{equation}
where $\lambda_1$ and $\lambda_2$ are the eigenvalues of the gyration tensor, and the expectation is taken over all configurations of the molecule. This two dimensional asphericity is a measure of the average extent to which the ellipse defined by equation (\ref{eqn:ellip}), departs from a circle. Since a circle is just a 1-sphere we use the terms spherical and circular interchangeably. The main focus of our paper is to explore the properties of $A_2$ computed from our egocentric gyration tensor (\ref{eqn:egoTen}) where the expectation is taken over the all possible sets, $L$, of memorised locations for given memory kernel. We emphasise that the egocentric version $A_2$ has a somewhat different interpretation to the standard $A_2$. Intuitively, the egocentric asphericity measures the extent to which an ellipse centred on the current location of the walker, drawn to enclose the bulk of the walker's memory, departs from a circle. Some notable examples which illustrate the difference between the egocentric and centre--of--mass versions of $A_2$ are illustrated in Figure \ref{fig:A2Examples}. As can been seen from Figures \ref{fig:cont}, \ref{fig:ellipSE2} and \ref{fig:ellipSE02}, despite these peculiarities, the egocentric asphericity is an effective measure of the elongation of a set $L$.

\section{Analytical results}

The set of memory times, $S$, is a (non-homogeneous) Poisson point process on $\left[0, \infty \right)$ with intensity function $c\mu(t)$. In particular, $\mathbb{E}[|S|] = \int_{0}^{\infty} c\mu(t)dt = c$.  The sums defined in (\ref{eqn:egoTen}) are taken over this process. Recall that $(X(t), Y(t))_{t \geq 0}$ is a two-dimensional standard Brownian motion.  Provided the rate of memory formation is sufficiently high, we may approximate the moments of elements of $T$ by the moments of integrals with stochastic integrands, that is, for polynomials $p_1(x, y)= x^2$, $p_2(x, y)=y^2$ and $p_{3}(x, y)=xy$, as $c \to \infty$, we have
\begin{align}
\mathbb{E}\left[ \left(\frac{1}{1+|S|}\sum_{t \in S} p_i(X(t),Y(t))\right)^{k} \right] \ra \mathbb{E}\left[\left(\int_0^\infty  p_i(X(s),Y(s)) \mu(s)ds\right)^{k}\right],
\label{eqn:approx_form1}\\
\mathbb{E}\left[\frac{1}{1+|S|}\sum_{t \in S} X^2(t) \frac{1}{1+|S|}\sum_{s \in S} Y^2(s) \right]\ra \mathbb{E}\left[\int_0^\infty  X^2(s) \mu(s)ds \right]\mathbb{E}\left[\int_0^\infty  Y^2(s) \mu(s)ds \right]  
\label{eqn:approx_form2}
\end{align}
for $k \in \{1, 2\}$ and $i \in \{1, 2, 3\}$. In particular, $\int_0^\infty p_i(X(s),Y(s)) \mu(s)ds$ is well-defined since the distribution $\mu$ has finite first moment. For the proof of formulae (\ref{eqn:approx_form1}) and (\ref{eqn:approx_form2}) we refer the reader to \ref{sect:Conv}.
\iffalse
 The notation $\stackrel{L^2}{\longrightarrow}$ indicates convergence in mean square: the mean square difference between the left and right hand sides of (\ref{eqn:approx_form}) tends to zero as $c \ra \infty$. We show in appendix \ref{sect:Conv} that this mean square difference is $O(c^{-1})$ as $c \ra \infty$.   
\fi
\subsection{Asphericity Theorem}

Let $M(t) = \int_0^t \mu(s) ds$, where $\mu$ is the memory kernel. 
Our main result is summarised in the following theorem, the proof of which requires two additional results (Lemma \ref{lemma: app.main1} and Lemma \ref{lemma: app.main2}) which are  stated and proved in \ref{sect:Lemmas}.

\begin{figure}[h!]
	\includegraphics[scale=0.4]{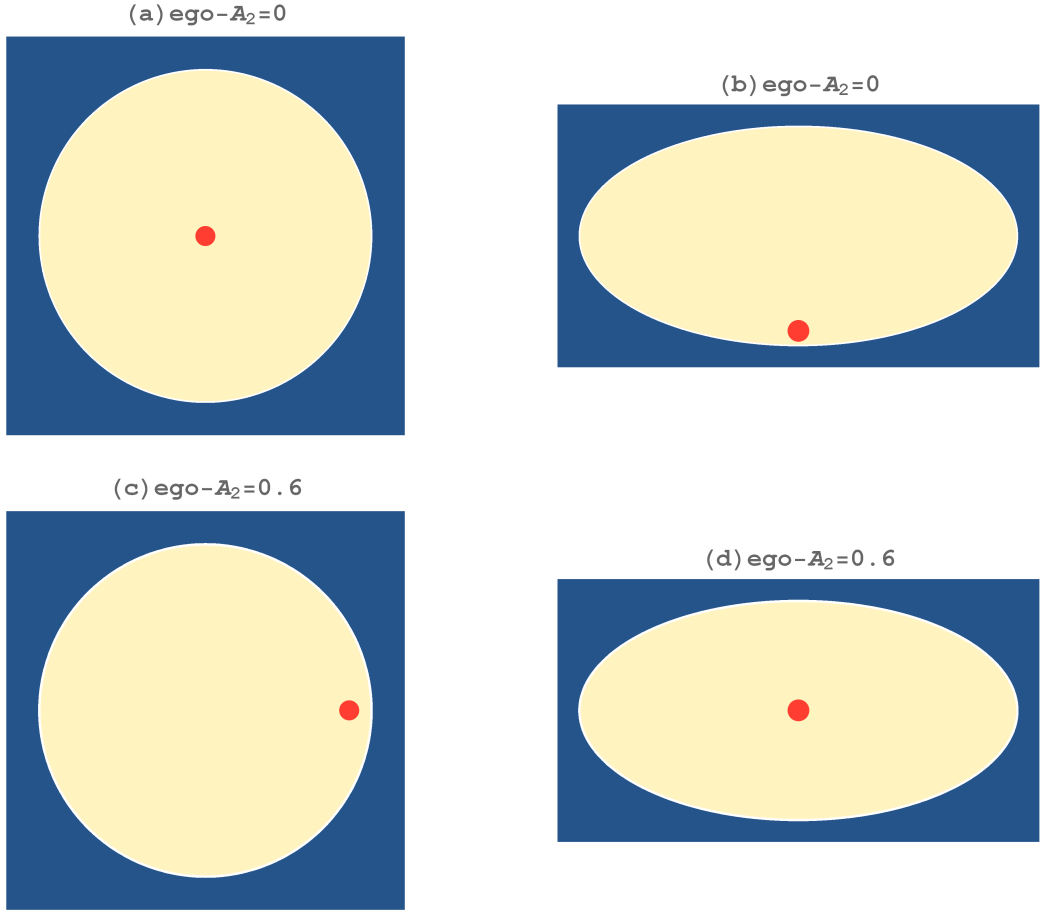}
	\caption{\label{fig:A2Examples} Examples illustrating the behaviour of egocentric asphericity. Red dots represent the current locations of walkers and yellow regions represent spatial memories. The same memory shape can have different asphericities, depending on the location of the walker. In a similar way, different memory shapes can have the same asphericity.   }
\end{figure}
\newpage
\begin{thm}[Egocentric Asphericity] \label{thm: main_thm}
	The egocentric asphericity %$A_2$ 
	of two-dimensional Brownian motion memorised according to $\mu$, that is, $(X(t), Y(t))_{t \in S}$, as $c\to \infty$ is
	\begin{equation*}
	A_2 = 1 - 4 \lim_{\tau \to \infty} \frac{\alpha(\tau)}{\beta(\tau)},
	\end{equation*}
	where 
	\begin{align}\label{eqn: A1}
	\alpha({\tau}) = &\frac{1}{2}\left[\left(\int_0^{\tau} s \mu(s) ds\right)^2 + \left(\int_0^{\tau} M(s) ds\right)^2-\tau^2M^2({\tau})\right]  \\ \nonumber
	&+ \int_0^{\tau} \left[(4 s - \tau) M(\tau) - 2s M(s)  \right] M(s) ds, 
	\end{align}
	\begin{align}
	\label{eqn: A2}
	\beta({\tau}) = & 2\left[\left(\int_0^{\tau} s \mu(s) ds\right)^2 + \left(\int_0^{\tau} M(s) ds\right)^2 + 3\tau^2M^2({\tau})\right] \\ \nonumber
	&- 4 \int_0^{\tau} \left[(4 s + \tau) M(\tau) - 2s M(s)  \right] M(s) ds.
	\end{align}
	
\end{thm}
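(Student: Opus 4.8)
The plan is to reduce the egocentric asphericity to the first two moments of the trace and determinant of the $2\times 2$ tensor $T$, pass to the $c\to\infty$ regime using (\ref{eqn:approx_form1})--(\ref{eqn:approx_form2}), evaluate the resulting Brownian-integral moments with Gaussian moment formulae, and finally rewrite the double integrals over $[0,\infty)^2$ in terms of $M$ by integration by parts against $dM=\mu\,ds$. First I would use elementary $2\times 2$ linear algebra: for any symmetric $T$ one has $\lambda_1+\lambda_2=\mathrm{tr}\,T=T_{11}+T_{22}$ and $\lambda_1\lambda_2=\det T=T_{11}T_{22}-T_{12}^2$, hence $(\lambda_1-\lambda_2)^2=(\mathrm{tr}\,T)^2-4\det T$, so that from (\ref{eqn:asp}), for every $c$, $A_2 = 1 - 4\,\EE[\det T]/\EE[(\mathrm{tr}\,T)^2]$. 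Both numerator and denominator are fixed linear combinations of $\EE[T_{11}^2]$, $\EE[T_{22}^2]$, $\EE[T_{11}T_{22}]$ and $\EE[T_{12}^2]$, so (\ref{eqn:approx_form1})--(\ref{eqn:approx_form2}) with $k=2$ give, as $c\to\infty$, $\EE[\det T]\to Q-R$ and $\EE[(\mathrm{tr}\,T)^2]\to 2P+2Q$, where, using the $X\leftrightarrow Y$ symmetry of Brownian motion, $P:=\EE[(\int_0^\infty X^2(s)\mu(s)\,ds)^2]$, $Q:=(\EE\int_0^\infty X^2(s)\mu(s)\,ds)^2=(\int_0^\infty s\mu(s)\,ds)^2$ and $R:=\EE[(\int_0^\infty X(s)Y(s)\mu(s)\,ds)^2]$ --- all finite since $\mu$ has a finite first moment. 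Since $\lim_c\EE[(\mathrm{tr}\,T)^2]\ge 2Q>0$, the limit commutes with the ratio, so $\lim_{c\to\infty}A_2=1-4(Q-R)/(2P+2Q)$.

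Next I would evaluate $P$ and $R$ using Gaussian (Isserlis) moments. For $0\le s\le u$, writing $X(u)=X(s)+(X(u)-X(s))$ gives $\EE[X^2(s)X^2(u)]=su+2(\min(s,u))^2$, while independence of $X$ and $Y$ gives $\EE[X(s)Y(s)X(u)Y(u)]=(\min(s,u))^2$. Integrating against $\mu(s)\mu(u)\,ds\,du$ then yields $R=\int_0^\infty\!\!\int_0^\infty(\min(s,u))^2\mu(s)\mu(u)\,ds\,du$ and $P=Q+2R$, so that $\lim_c\EE[\det T]=Q-R$ and $\lim_c\EE[(\mathrm{tr}\,T)^2]=4(Q+R)$, giving $\lim_{c\to\infty}A_2=1-(Q-R)/(Q+R)$. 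It therefore remains to identify $Q-R$ with $\lim_{\tau\to\infty}\alpha(\tau)$ and $4(Q+R)$ with $\lim_{\tau\to\infty}\beta(\tau)$.

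For this final step I would truncate the outer and inner ranges at $\tau$ and integrate by parts against $dM=\mu\,ds$: once to obtain $\int_0^\tau s\mu(s)\,ds=\tau M(\tau)-\int_0^\tau M(s)\,ds$, and twice more to rewrite $\int_0^\tau\mu(u)\int_0^u s^2\mu(s)\,ds\,du$, retaining the boundary terms at $\tau$. Collecting terms reproduces exactly (\ref{eqn: A1}) and (\ref{eqn: A2}); the apparent mismatch in the coefficient of $(\int_0^\tau s\mu\,ds)^2$ and the extra $(\int_0^\tau M\,ds)^2$ and $\tau^2M^2(\tau)$ terms is reconciled using the same identity $\int_0^\tau s\mu\,ds=\tau M(\tau)-\int_0^\tau M\,ds$. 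These integration-by-parts computations are precisely the content of Lemmas \ref{lemma: app.main1} and \ref{lemma: app.main2}. Letting $\tau\to\infty$ then returns $Q-R$ and $4(Q+R)$ --- note that $\int_0^\tau M\,ds$ and $\tau^2M^2(\tau)$ diverge individually, so the limit must be taken on the full combination --- which establishes the claimed formula. I expect the main obstacle to be exactly this last step: tracking the boundary terms and verifying the algebraic identities that collapse several individually divergent quantities into a single convergent expression, together with the (routine) check that the $c\to\infty$ limit may be taken inside the ratio.
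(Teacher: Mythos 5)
Your proposal is correct, but its core computation takes a genuinely different route from the paper. The reduction $A_2 = 1-4\,\EE[\det T]/\EE[(\mathrm{tr}\,T)^2]$ and the passage to the $c\to\infty$ limit via (\ref{eqn:approx_form1})--(\ref{eqn:approx_form2}) are exactly as in the paper. After that, the paper works with the $\tau$-truncated integrals $T_{ij}(\tau)$ from the outset and computes $\EE[T_{11}^2(\tau)]$, $\EE[T_{12}^2(\tau)]$, $\EE[T_{11}(\tau)T_{22}(\tau)]$ by It\^{o} calculus: Lemma \ref{lem: app.square} rewrites $\int_0^t W^2\mu\,ds$ via It\^{o}'s lemma, and Lemmas \ref{lemma: app.main1}--\ref{lemma: app.main2} (through the auxiliary Brownian motion $B=(W_1+W_2)/\sqrt{2}$, It\^{o} isometry and the product rule) deliver (\ref{eqn: A1})--(\ref{eqn: A2}) directly, so that $\alpha(\tau)$ and $\beta(\tau)$ appear by construction and only the ratio's limit is taken. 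You instead compute the untruncated moments over $[0,\infty)$ by Isserlis/Wick from the covariance $\EE[X(s)X(u)]=\min(s,u)$, obtaining the compact closed form $A_2=1-(Q-R)/(Q+R)=2R/(Q+R)$ with $Q=\bigl(\int_0^\infty s\mu(s)\,ds\bigr)^2$ and $R=\int_0^\infty\!\!\int_0^\infty \min(s,u)^2\mu(s)\mu(u)\,ds\,du$, and then recover (\ref{eqn: A1})--(\ref{eqn: A2}) by truncation and integration by parts against $dM=\mu\,ds$; I checked that this reconciliation does work (using $\int_0^\tau s\mu\,ds=\tau M(\tau)-\int_0^\tau M\,ds$, the stated $\alpha(\tau)$ is exactly $Q_\tau-R_\tau$ and $\beta(\tau)=4(Q_\tau+R_\tau)$), and since $R\le Q<\infty$ both limits exist separately, so taking the limit of the ratio is legitimate. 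Your approach buys elementarity (no stochastic calculus) and a cleaner final formula $A_2=2R/(Q+R)$, which reproduces the paper's examples ($4/5$ for uniform, $2/3$ for exponential); the paper's approach buys the explicit finite-$\tau$ expressions used in the worked examples without any separate matching step. Two small points to tighten: justify Fubini for $R$ (the integrand $X(s)Y(s)X(u)Y(u)$ is signed, but Cauchy--Schwarz gives the integrable bound $3su\,\mu(s)\mu(u)$), and note that your closing attribution is slightly off --- Lemmas \ref{lemma: app.main1} and \ref{lemma: app.main2} state the same moment identities you need, but they are proved there by It\^{o} calculus rather than by your integration-by-parts argument, so you are re-deriving them, not invoking them.
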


\begin{proof}
	Let us consider the gyration tensor $T$ in (\ref{eqn:egoTen}). 
	The asphericity $A_2$ is defined by (\ref{eqn:asp}).
	\iffalse
	$$A_2 = \frac{\mathbb{E}\left[(\lambda_1 - \lambda_2)^2\right]}{\mathbb{E}\left[(\lambda_1 + \lambda_2)^2\right]},$$ where 
	$\lambda_1$, $\lambda_2$ are the eigenvalues of the gyration tensor $T$ in (\ref{eqn:egoTen}). 
	\fi
\iffalse
	$$ T = \left[\begin{array}{cc} T_{11} & T_{12} \\ T_{12} & T_{22} \end{array}\right] =\left[\begin{array}{cc}\int_0^\infty \mu(s)X^2(s)ds & \int_0^\infty \mu(s)X(s)Y(s)ds \\
	\int_0^\infty \mu(s)X(s)Y(s)ds & \int_0^\infty \mu(s)Y^2(s)ds
	\end{array}\right].$$
	\fi
	Since $\lambda_1+\lambda_2 =\mbox{Tr}(T) = T_{11} + T_{22}$,
	$ (\lambda_1 - \lambda_2)^2 = \left(\mbox{Tr}(T)\right)^2-4\lambda_1\lambda_2$ and $\lambda_1\lambda_2 = \det(T) =  T_{11}T_{22}-T_{12}^2$  we obtain
	$$A_2 =  1 - 4 \frac{\mathbb{E}\left[T_{11}T_{22}\right]-\mathbb{E}\left[T_{12}^2\right]}{\mathbb{E}\left[(T_{11}+T_{22})^2\right]} =  1 - 4 \frac{\mathbb{E}[T_{11}T_{22}] -\mathbb{E}\left[T_{12}^2\right]}{\mathbb{E}[T_{11}^2]+ 2 \mathbb{E}[T_{11}T_{22}]+\mathbb{E}[T_{22}^2]}.
	$$ 
	As $c \to \infty$, we apply formulae (\ref{eqn:approx_form1}), (\ref{eqn:approx_form2}), so $\mathbb{E}[T_{11}T_{22}]$ and $\mathbb{E}[T_{ij}^{k}]$ are equal to the corresponding expectation of time integrals with a Brownian integrand for $i \leq j \in \{1, 2\}$ and $k \in \{1, 2\}$. We may therefore replace our initial definitions of the elements $T_{ij}$ in (\ref{eqn:egoTen}), with integrals having the same moments as these elements as $c \ra \infty$, that is,
		\begin{align*}
	T_{11} = \int_0^\infty X^2(s)\mu(s)ds, \ T_{22} = \int_0^\infty Y^2(s)\mu(s)ds, \ T_{12}= \int_0^\infty X(s)Y(s)\mu(s)ds.
	\end{align*}
	
	\noindent 
	Let $\tau > 0$ and define
	\begin{align*}
	T_{11}(\tau) = \int_0^\tau X^2(s)\mu(s)ds, \ T_{22}(\tau) = \int_0^\tau Y^2(s)\mu(s)ds, \ T_{12}(\tau) = \int_0^\tau X(s)Y(s)\mu(s)ds.
	\end{align*}
	In particular, monotone and dominated convergence theorems \cite{grim01} yield
	\begin{align*}
\mathbb{E}[T_{ij}] =& \lim_{\tau \to \infty} \mathbb{E}[T_{ij}(\tau)] \mbox{ for all } i\leq j \in \{1, 2\}. 
\end{align*}
	Furthermore, independence of $X$ and $Y$, and Lemma \ref{lemma: app.main1} imply
	\begin{align*}
	\mathbb{E}[T_{11}(\tau)T_{22}(\tau)]=& \mathbb{E}[T_{11}(\tau)]\mathbb{E}[T_{22}(\tau)] = \left(\int_0^\tau s\mu(s)ds\right)^2.
	\end{align*}
	Clearly,  $\mathbb{E}[T_{11}^2(\tau)]= \mathbb{E}[T_{22}^2(\tau)]$. We then apply Lemma \ref{lemma: app.main1} and Lemma \ref{lemma: app.main2} to obtain $\mathbb{E}[T_{11}^2(\tau)]$ and $\mathbb{E}[T_{12}^2(\tau)]$, and hence the desired form of all terms of $A_2$; in particular
	\begin{align*}
	\alpha(\tau) = &\mathbb{E}\left[T_{11}(\tau)T_{22}(\tau)\right]-\mathbb{E}\left[T_{12}^2(\tau)\right], \\
	\beta(\tau) = & \mathbb{E}\left[(T_{11}(\tau)+T_{22}(\tau))^2\right]
	\end{align*}
	coincide with (\ref{eqn: A1}) and (\ref{eqn: A2}). 
	
\end{proof}

\section{Examples}

To illustrate how our egocentric asphericity theorem (Theorem \ref{thm: main_thm}) may be used to calculate asphericities, we work though four examples. A larger set of exact results are collected in Table \ref{tbl:asps}. 

\subsection{Uniform kernel }
Consider the pdf of a continuous uniform distribution with support $[0, r]$ for some $r>0$, to be a memory kernel. Then 
$\mu(t) = \frac{1}{r}\textbf{1}_{\{t < r\}}$, and the corresponding first moment equals to $\frac{r}{2}$ and $M(t) = \begin{cases} \frac{t}{r} & \mbox{if } t<r \\1& \mbox{if } t \geq r \end{cases}$. Let $r>0$ and take $\tau > r$ then
\begin{align*}
\alpha( \tau) = \frac{r^2}{12}, \
\beta(\tau) = \frac{5}{3}r^2.
%\frac{\alpha(\tau)}{\beta(\tau)} =& \frac{-5 + \exp(2 \tau) - 4 \exp(\tau)(-1 + \tau) - 2 \tau }{4(9 + 3 \exp(2 \tau) + 10 \tau + 4 \tau^2 - 
%	4 \exp(\tau) (3 + \tau) )} 
\end{align*}
Clearly 
$$\lim_{\tau \to \infty} \frac{\alpha(\tau)}{\beta(\tau)} = \frac{1}{20}.$$
Hence, the corresponding asphericity is  
$$A_2 =  1 - 4 	\lim_{\tau \to \infty}\frac{\alpha(\tau)}{\beta(\tau)}  = \frac{4}{5}.$$
\subsection{Exponential kernel}
We take the memory kernel to be exponential, that is, $\mu(t) =\lambda \exp(-\lambda t)$ for  $\lambda>0$. Recall, that the first moment of exponential distribution is $\frac{1}{\lambda}$ and $M$ is the corresponding CDF, that is, $M(t) = 1-e^{-\lambda t}$. The asphericity of the memorised walk with exponential kernel is not affected by $\lambda$ so for simplicity we fix $\lambda =1$ so $\mu(t) = \exp(-t)$. Then
\begin{align*}
\alpha( \tau) =& \frac{\exp(-2 \tau)(-5 + \exp(2 \tau) - 4 \exp(\tau)(-1 + \tau) - 2 \tau)}{2 },  \\
\beta(\tau) =&  \exp(-2 \tau) (18 + 6 \exp(2 \tau) + 20 \tau + 8 \tau^2 - 
8 \exp(\tau) (3 + \tau))\\
%\frac{\alpha(\tau)}{\beta(\tau)} =& \frac{-5 + \exp(2 \tau) - 4 \exp(\tau)(-1 + \tau) - 2 \tau }{4(9 + 3 \exp(2 \tau) + 10 \tau + 4 \tau^2 - 
%	4 \exp(\tau) (3 + \tau) )} 
\end{align*}
and so 
\begin{align*}
\lim_{\tau \to \infty} \frac{\alpha(\tau)}{\beta(\tau)} = \frac{1}{12}.
\end{align*}
Hence, 
$$A_2 = 1 - 4 	\lim_{\tau \to \infty}\frac{\alpha(\tau)}{\beta(\tau)} = 1 - \frac{4}{12} = \frac{2}{3}.$$

\subsection{Stretched exponential kernel}

The stretched exponential density function is defined
\begin{equation}
\mu(t)=\frac{a e^{-t^a}}{\Gamma(a^{-1})},
\end{equation}
where $\Gamma$ is the Gamma function 
\begin{equation*}
\Gamma(a) = \int_0^\infty t^{a-1} e^{-t} dt. 
\end{equation*}
The cumulative stretched exponential is
\begin{equation}
M(t) = \frac{ \gamma(a^{-1},t^a)}{\Gamma(a^{-1})}
\end{equation}
where $\gamma$ is the lower incomplete gamma function
\begin{equation*}
\gamma(a,x) = \int_0^x t^{a-1} e^{-t} dt.
\end{equation*}
The cumulative $M(t)$ is in fact a \textit{regularised} lower incomplete gamma function \cite{olv10}. 
As an example we find the ashpericity for the memorised walk with stretch exponential kernel with parameter $a= \frac{1}{2}$. 
In order to determine the limits of $\alpha(\tau)$ and $\beta(\tau)$ we note that
\begin{align*}
\int_0^\tau s \mu(s) ds =& \gamma (4, \sqrt{\tau}), \\
\int_0^{\tau} M(s) ds =&\tau - 2 \gamma(2 , \sqrt{\tau}) - 2 \gamma(3 , \sqrt{\tau}),\\
\int_0^{\tau} sM(s) ds =& \frac{1}{2}\tau^2M(\tau) - \frac{1}{2}\gamma(6, \sqrt{\tau}),\\
\int_0^{\tau} sM^2(s) ds =& \frac{1}{2}\tau^2 M^2(\tau) - \gamma(6, \sqrt{\tau}) + \frac{1}{2^6}\left(\gamma(6, 2 \sqrt{\tau}) + \frac{1}{2}\gamma(7, 2 \sqrt{\tau})\right).
\end{align*}
Taking into account that 
for $i, j \in \{1, 2\}$, $\tau^i (1- M^j(\tau)) \to 0$  as $\tau \to \infty$ and expanding the terms of $\alpha(\tau)$ and $\beta(\tau)$, we note that only terms which are $o(\tau)$, as $\tau \to \infty$, do not cancel out. Using the fact that for a continuous function $f$ such that $f(\tau) \to \infty$, as  $\tau \to \infty$, and $n \in \mathbb{N}$ we have $$\lim_{\tau \to \infty} \gamma(n, f(\tau)) = \Gamma(n)= (n-1)!,$$ we obtain 
\begin{align*}
\lim_{\tau \to \infty} \alpha(\tau) =& \lim_{\tau \to \infty}\left[2 \left(\gamma \left(2,\sqrt{t}\right)+\gamma \left(3,\sqrt{t}\right)\right)^2+\frac{\gamma \left(4,\sqrt{t}\right)^2}{2}\right.\\
& \left. +\frac{1}{32} \left(-\gamma \left(6,2
\sqrt{t}\right)-\frac{\gamma \left(7,2 \sqrt{t}\right)}{2}\right)\right]
\\
&= 21, \\
\lim_{\tau \to \infty} \beta(\tau) =& \lim_{\tau \to \infty}\left[8 \left(\gamma \left(2,\sqrt{t}\right)+\gamma \left(3,\sqrt{t}\right)\right)^2+2 \gamma \left(4,\sqrt{t}\right)^2\right.\\&\left.+\frac{1}{8} \left(\gamma \left(6,2 \sqrt{t}\right)+\frac{\gamma
	\left(7,2 \sqrt{t}\right)}{2}\right)\right]
\\
&= 204.
\end{align*}
Hence, 
$$A_2 = 1- 4 \cdot \frac{21}{204} = \frac{10}{17}.$$

\subsection{Lomax kernel}
The density function of the Lomax distribution (or the Pareto Type II distribution) is given by 
$$ \mu(t) = \frac{a}{\lambda} \left(1+ \frac{t}{\lambda}\right)^{-(a + 1)},$$
where $\lambda >0$ is a scale parameter and $a>0$ is a shape parameter. 
Similarly, to the exponential case, we fix $\lambda = 1$ since the scale would not affect the asphericity. The first moment of the Lomax distribution exists and equals to
$\frac{1}{a - 1}$ only if $a>1$; for that reason let us consider $a >1$. The corresponding CDF, $M$ is given by $M(t) = 1 - \left(1+ \frac{t}{\lambda}\right)^{-\alpha}$. 
For 
$$ \mu(t)= a \left(1 + t\right)^{-(a+ 1)}$$
we find that 
\begin{align*}
\int_0^\tau s \mu(s) ds =&\frac{ (\tau +1)^a-a \tau -1}{(a-1)(\tau +1)^{a}}, \\
\int_0^{\tau} M(s) ds =&\tau +\frac{\tau +1-(\tau +1)^a }{(a-1)(\tau +1)^{a}},\\
\int_0^{\tau} sM(s) ds =& \frac{\tau ^2}{2}-\frac{(\tau +1)^a-a \tau  (\tau +1)+\tau
	^2-1}{(a-2) (a-1)(\tau +1)^{a}},\\
\int_0^{\tau} sM^2(s) ds =&\frac{\tau^2}{2}+ \frac{(\tau +1)^{2 a}-2 a \tau  (\tau +1)+\tau ^2-1 }{2(a-1) (2 a-1)(\tau
	+1)^{2 a}}\\&-\frac{4 (\tau +1)^a-a \tau  (\tau +1)+\tau ^2-1
}{2(a-2) (a-1)	(\tau +1)^{a}}.
\end{align*}
Using the above formulae and simplifying the terms in (\ref{eqn: A1}) and (\ref{eqn: A2}) we arrive at
\begin{align*}
\lim_{\tau \to \infty} \alpha(\tau) =& \frac{a}{(a-1)^2 (2 a-1)},\\
\lim_{\tau \to \infty} \beta(\tau) = &\frac{4 (3 a-2)}{(a-1)^2 (2 a-1)}.
\end{align*}
Hence, 
\begin{align*}
A_2 = \frac{2(a-1)}{3a-2}.
\end{align*}
Now, observe that
$$\lim_{a \to 1}  \frac{2(a-1)}{3a-2} = 0.$$
Thus, as $a$ approaches $1$ the memorised walk becomes more and more spherical!

\begin{table}[h!]
	\caption{ \label{tbl:asps}
		Asphericity of memorised random walks with different memory kernels. 
	}
	\begin{center}
		\begin{tabular}{|c|c|c|} \hline 
			\textbf{distribution name} &$\mu(t)$ & $A_2$ \\
			\hline
			Uniform supported on $[0,r], \ r>0$&$\frac{1}{r}\mathbf{1}_{\{t<r\}}$ & $\frac{4}{5}=0.8$ \\\hline
			Half normal with scale equal to $1$&$\frac{\sqrt{2}}{\sqrt{\pi}} \exp\left(\frac{-t^2}{2}\right)$& $ 2- \frac{4}{\pi} \approx 0.727$\\ 	\hline
			Exponential with rate $\lambda = 1$ &$\exp(-t)$ & $\frac{2}{3} \approx 0.667$ \\ \hline
			Stretched exponential with exponent $\frac{1}{2}$ &$\frac{1}{2}\exp\left(-t^{\frac{1}{2}}\right)$ & $\frac{10}{17} \approx 0.588$ \\ \hline
			Stretched exponential with exponent $\frac{1}{4}$&$\frac{1}{24}\exp\left(-t^{\frac{1}{4}}\right)$ & $\frac{594}{1193} \approx 0.498$ \\ \hline 
			Lomax with scale $\lambda =1$ and shape $a=1.5$ &$ 1.5 \left(1 + t\right)^{-2.5}$ & $\frac{2}{5} = 0.4$ \\ \hline 
			Lomax with scale $\lambda =1$ and shape $a=1.25$ &$ 1.25 \left(1 + t\right)^{-2.25}$ & $\frac{2}{7} \approx 0.286$ \\  \hline
			Lomax with scale $\lambda =1$ and shape $a=1.05$ &$ 1.05 \left(1 + t\right)^{-2.05}$ & $\frac{2}{23} \approx 0.0870$ \\ \hline 
		\end{tabular}
		
	\end{center}
\end{table}

\section{Simulation results}

\begin{figure}[h!]
	\includegraphics{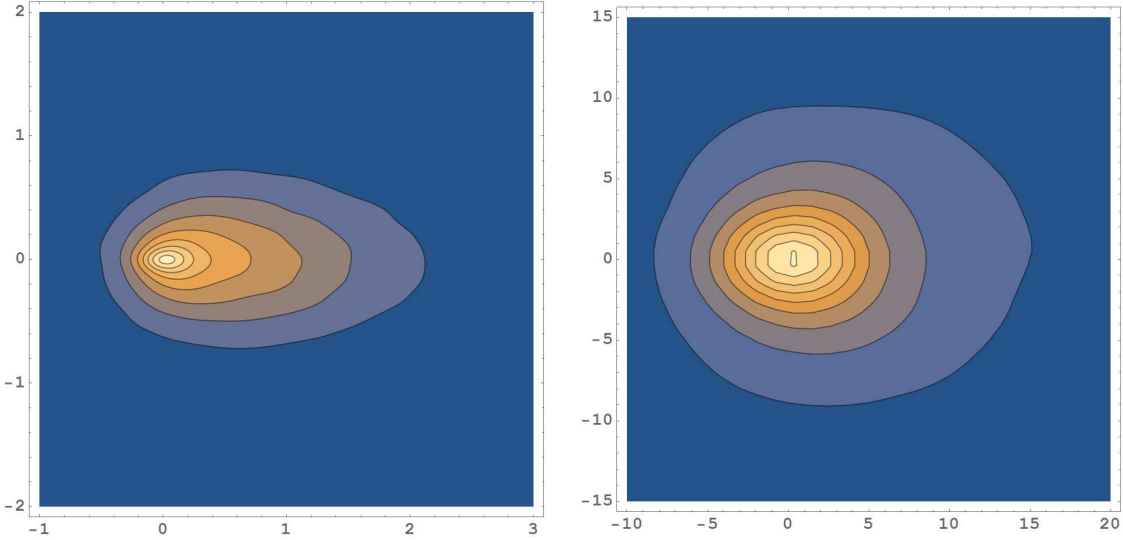}
	\caption{\label{fig:cont} Contour plots of probability density function for memorised locations when memory kernel is stretched exponential $\mu(t)=\frac{a e^{-t^a}}{\Gamma(a^{-1})}$.  Left: Kernel parameter $a=1$ corresponding to asphericity $A_2=2/3$ . Right:  Kernel parameter $a=0.5$ corresponding to asphericity $A_2=10/17$. In both cases, $c=1000$. }
\end{figure}

\begin{figure}[h!]
	\includegraphics{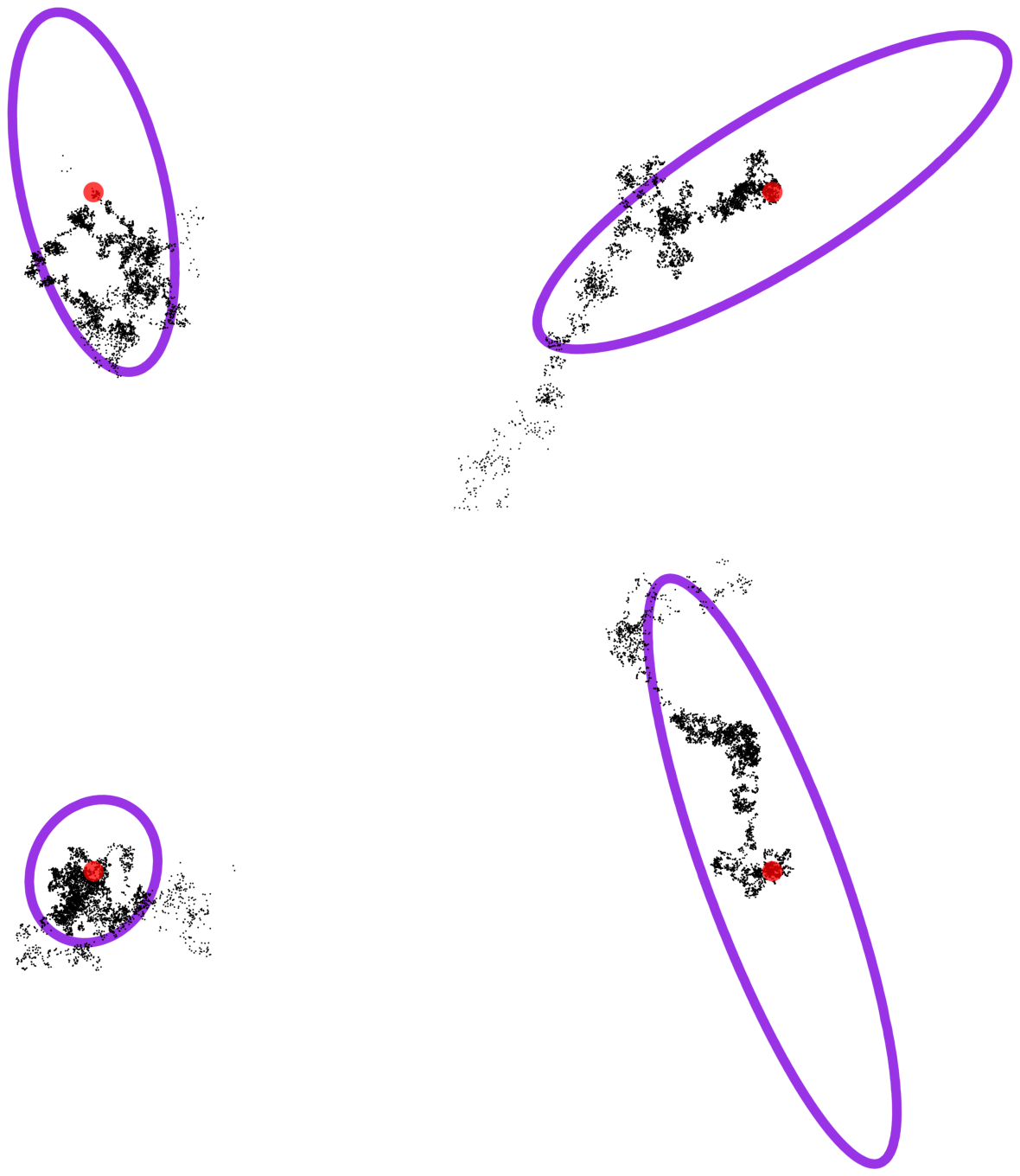}
	\caption{\label{fig:ellipSE2} Sets, $L$, of locations in a memorised random walk with stretched exponential kernel $\mu(t)=\frac{a e^{-t^a}}{\Gamma(a^{-1})}$ with parameter $a=2$ (a half-normal distribution). Memory rate $c=5000$. Purple ellipses (equation (\ref{eqn:ellip}) with $\kappa=2$) have major and minor axes aligned with eigenvectors (principle axes) of the asphericity tensor, with lengths equal to double the mean squared displacement of the memory locations along these principle directions. Red dots show current location of walker.  }
\end{figure}

\begin{figure}[h!]
	\includegraphics{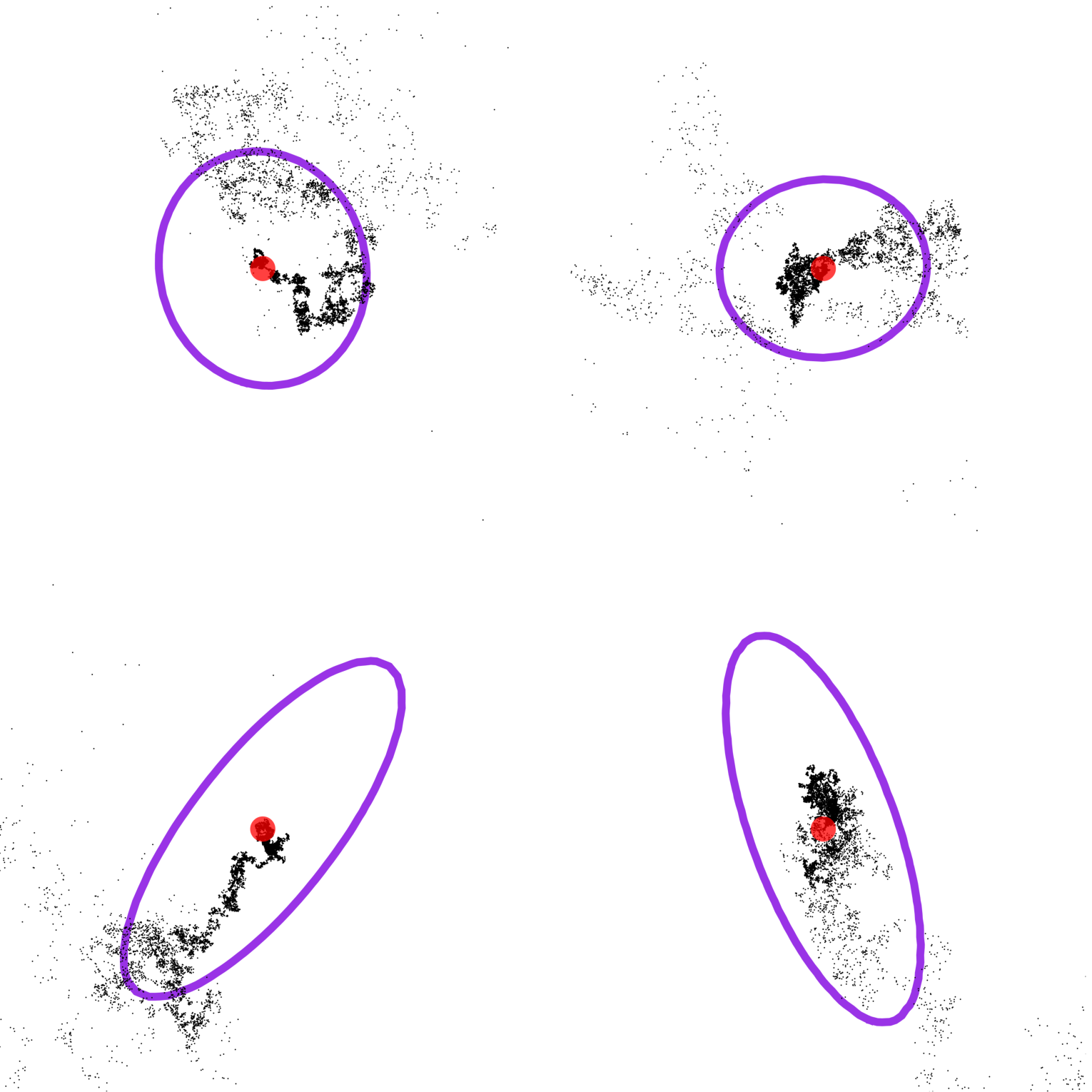}
	\caption{\label{fig:ellipSE02} Sets, $L$, of locations in a memorised random walk with stretched exponential kernel $\mu(t)=\frac{a e^{-t^a}}{\Gamma(a^{-1})}$ with parameter $a=0.2$. Memory rate $c=10^4$. Purple ellipses (equation (\ref{eqn:ellip}) with $\kappa=2$) have major and minor axes aligned with eigenvectors (principle axes) of the asphericity tensor, with lengths equal to double the mean squared displacement of the memory locations along these principle directions. Red dots show current location of walker. }
\end{figure}

To efficiently generate sample sets $S$, we make use of the fact that the set of sample times may be viewed as the arrival times of an inhomogeneous Poisson process with rate function $c \mu(t)$. At time $\tau\geq 0$, the cumulative distribution of the next arrival time $Z$ is therefore 
\begin{equation}
\PP[Z \leq t | Z>\tau] = F_\tau(t) = \begin{cases}
1- \exp \left(-c\int_{\tau}^t \mu(s) ds \right) & \text{ if } t \ge \tau \\
0 & \text{ otherwise.}
\end{cases} 
\end{equation} 
Defining 
\begin{align*}
M(t) &= \int_0^t \mu(s) ds, \\ 
p(\tau) & = \lim_{t \ra \infty} F_\tau(t)
\end{align*}
then if $u \in [0,p(\tau))$ the inverse cumulative exists and may be written as
\begin{equation}
F_\tau^{-1}(u) =  M^{-1}\left(M(\tau)-\frac{\ln(1-u)}{c}\right).
\end{equation}
Note that $p(\tau)$ is the probability that there is at least one sample (arrival) time in the interval $[\tau,\infty)$. Letting the sequence of sample times be $0<Z_1<Z_2< \ldots$, then given $Z_n$, the next sample in memory may be simulated by noting that $Z_{n+1} \stackrel{d}{=} F_{Z_n}^{-1}(U)$, where $\stackrel{d}{=}$ denotes equality in distribution, and $U$ is a random variate uniform on $[0,1]$. The result $U>p(Z_n)$ is equivalent to the event that $Z_n$ is the last sample time, and no later times are recalled. Given a simulated set, $S$, of sample times, it is straightforward to generate the set of locations $L$ by noting that
\begin{align}
X(Z_{n+1}) -  X(Z_n) & \sim \mathcal{N}(0,Z_{n+1}-Z_n) \\
Y(Z_{n+1}) -  Y(Z_n) & \sim \mathcal{N}(0,Z_{n+1}-Z_n)
\end{align}
where $\mathcal{N}(\mu,\sigma^2)$ is the normal distribution with mean $\mu$ and variance $\sigma^2$.

\subsection{Visualising the shape of the walker's memory}

We give two alternative visualisations of the shape of a walker's memory in order to provide greater intuition about the meaning of egocentric asphericity. 

\subsubsection{Averaging over many paths}

To visualise the typical shape of the walker's memory for given kernel, we construct a smooth spatial density function using a large number of memory sets $\{L_1, L_2, \ldots \}$. The ensemble of all points in these sets forms a spherically symmetric distribution, which fails to capture the aspherical shape of individual sets. In order to capture this asphericity within our average, before averaging we individually rotate each set so that the major axis of its inertia tensor is horizontally aligned and its centre of mass has positive horizontal coordinate. This latter condition allows us to see the extent to which the walker is displaced from the centre of mass of its memory. Two examples of the resulting density, computed using kernel density estimation tools available in Mathematica\textsuperscript{\textregistered},  are illustrated in Figure \ref{fig:cont}. We see that the heavier tailed memory kernel results in a more spherically symmetric memory distribution, as predicted by our asphericity calculation. We note that the centre of mass condition yields a tear-drop shaped distribution, reflecting the fact that the memory of a walker, as well as being elongated, has a centre of mass which is displaced from his current position. We note that without the centre of mass conditions the densities in Figure \ref{fig:cont} would be approximately ellipsoidal in shape rather than tear-drops.  We emphasizs that our asphericity calculations explore only the average elongation of a ellipses (not tear drops) derived from the inertia tensor of each set $L$. We now illustrate some individual examples of such ellipses.

\subsubsection{Examining individual paths}

We now explore how egocentric gyration tensor, and its corresponding ellipses, are related to the shapes of individual sets $L$. Figures \ref{fig:ellipSE2} and \ref{fig:ellipSE02} show two sets of walks, each with a superimposed ellipse (equation (\ref{eqn:ellip}) with $\kappa=2$) having major and minor axes aligned with eigenvectors (principle axes) of the asphericity tensor, with lengths equal to double the mean squared displacements of the memory locations along these principle directions. From Figures \ref{fig:ellipSE2} and \ref{fig:ellipSE02} we see that ellipses defined in this way enclose the majority of the memory locations, and provide a simple characterisation of the shape of the set $S$. Asphericity is the average extent to which such ellipses depart from circular form. Memory kernels which are more rapidly decaying tend to produce more elongated memory sets, as in Figures \ref{fig:ellipSE2}. In Figure \ref{fig:ellipSE02}, the kernel has a much fatter tail, producing a memory set which is typically more evenly distributed about the current location. We emphasise that memory sets generated using a given memory kernel may a have a wide range of elongations, so only the asphericity may be used to draw conclusions about average elongation: Figures \ref{fig:ellipSE2} and \ref{fig:ellipSE02} are illustrative examples.

\begin{figure}[h!]
	\centering
	\includegraphics[width=0.75 \textwidth]{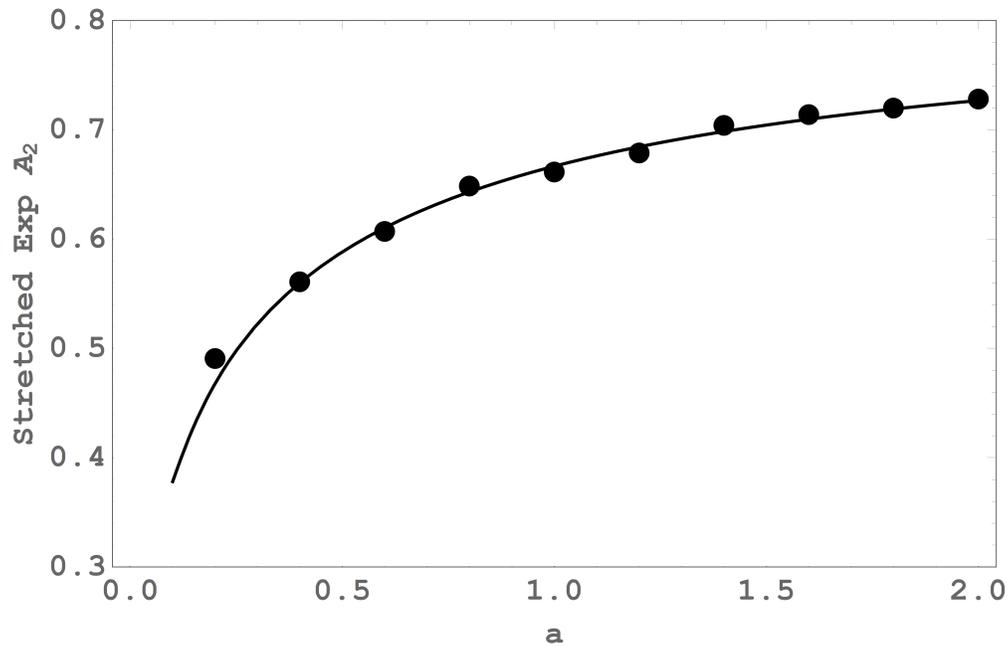}
	\caption{\label{fig:A2_SE} Asphericity of memorised random walk with stretched exponential kernel $\mu(t)=\frac{a e^{-t^a}}{\Gamma(a^{-1})}$ as a function of kernel parameter $a$. Dots show simulated asphericities when $c=2000$, computed from $10^4$ simulations.   }
\end{figure}

\begin{figure}[h!]
	\centering
	\includegraphics[width=0.75 \textwidth]{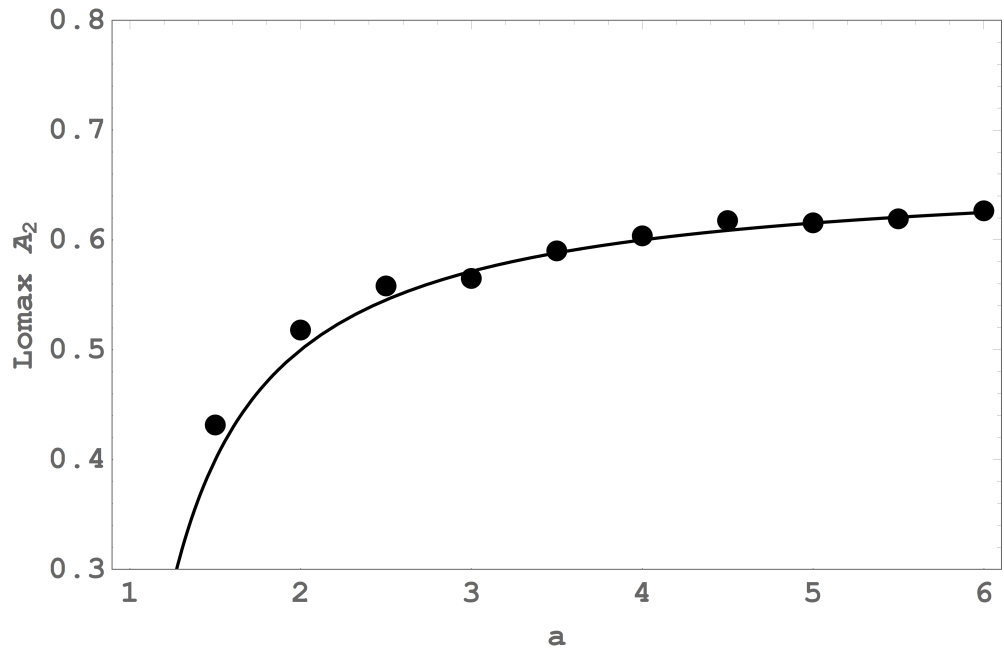}
	\caption{\label{fig:A2_LO} Asphericity of memorised random walk with Lomax kernel $\mu(t) = a \left(1+ t\right)^{-(a + 1)}$ as a function of kernel parameter $a$. Circles show simulated asphericities when $c=10^4$, computed from $10^4$ simulations. }
\end{figure}

\begin{figure}[h!]
	\centering
	\includegraphics[width=0.75 \textwidth]{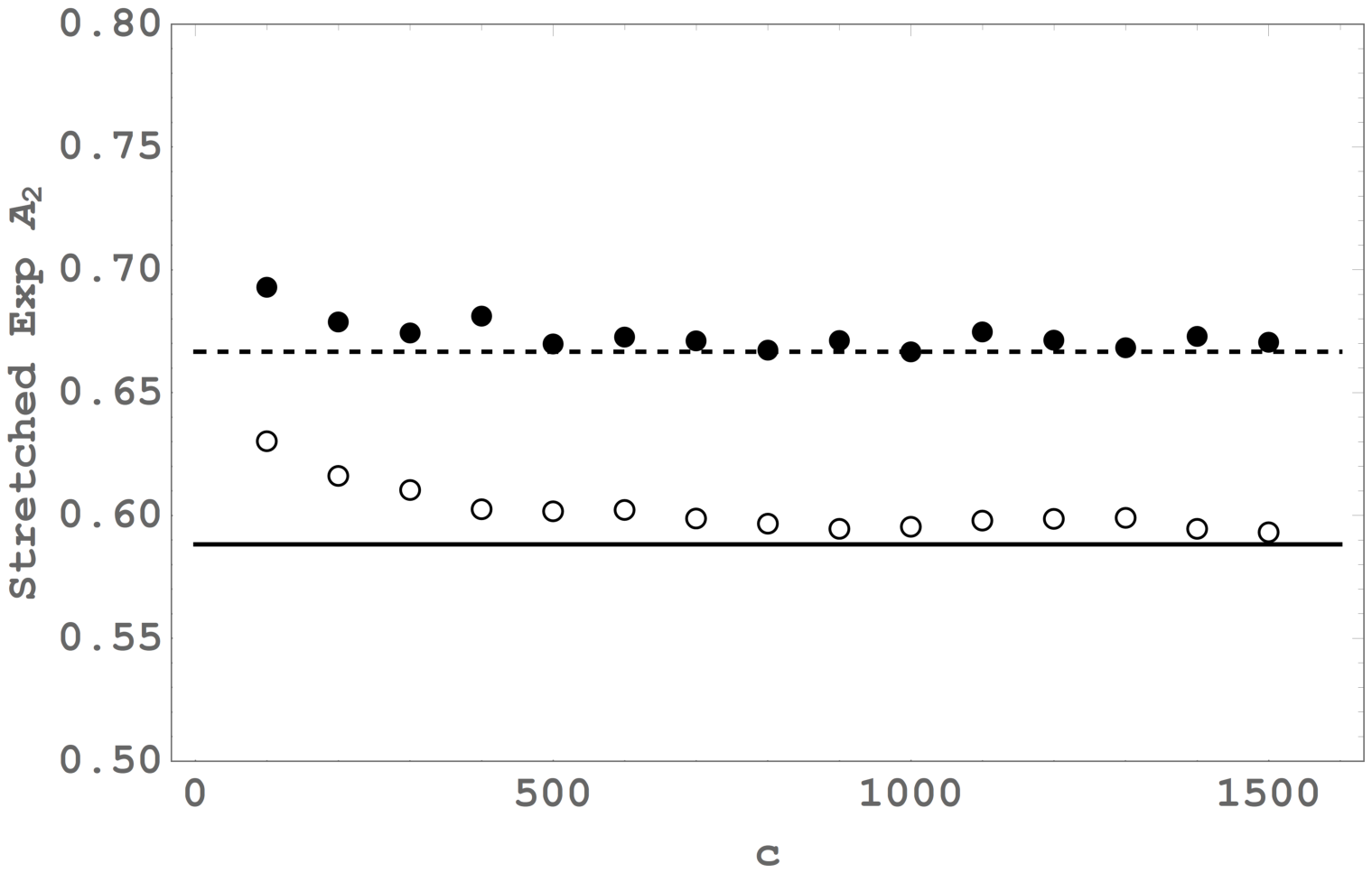}
	\caption{\label{fig:A2_c} Dependence of asphericity on the rate parameter $c$. Lines give asphericities in the $c \ra \infty$ limit for two forms of the stretched exponential distribution: $a=1$ (Dashed) and $a=1/2$ (solid). Dots and circles give simulated asphericities estimated from $10^4$ simulations.  }
\end{figure}

\subsection{Comparison between simulation and analytical results}

To verify our asphericity theorem we estimate $A_2$ by generating large numbers of sets $L$ for given memory kernel and rate $c$, computing the egocentric gyration tensor for each set, and then averaging the sum and difference of its eigenvalues over all sets. In Figure \ref{fig:A2_SE} we consider the stretched exponential kernel, and compare the results of these simulations to our theorem. Figure \ref{fig:A2_LO} gives the corresponding results for the Lomax kernel. In both cases we see a close match between simulation and analysis. We also note that asphericity declines as the tail of the kernel becomes fatter. An intuitive explanation of this effect is that for fat tailed memory kernels the time intervals between observations grow rapidly, and therefore so do the distances been successive memory locations. As a result the spatial memory is more diffuse, and space is covered more evenly with locations. For more sharply decaying kernels, each successive location is close to the last, producing a set $L$ with a structure similar to that of the original walk which was used to create it. 

The simulated asphericity results for the Lomax kernel (Figure \ref{fig:A2_LO}) show some small but systematic deviation from our analytical predictions as $a \ra 1$. This reflects the fact that for finite $c$, the set $S$ is finite ($\EE[|S|]=c$), and therefore cannot fully capture the large $t$ behaviour of $\mu(t)$. The effect becomes more pronounced for smaller $c$ values and for fatter tailed kernels. To illustrate this, in Figure \ref{fig:A2_c} we have examined the dependence of the simulated $A_2$ value on the rate parameter $c$. We see that for small $c$ values, simulations overestimate the asphericity because the tail of the kernel is under--sampled. As $c$ increases the simulated $A_2$ approaches the limiting analytical value. This rate of convergence is slower for the fatter tailed kernel, because a greater number of memory times are required to effectively sample the long tail.

\section{Discussion}

We have studied a very simple model of how a foraging animal might construct a mental map \cite{nor09,jan98} of its surroundings, showing that its memory kernel has a significant effect on the spatial structure of recalled locations. We have provided an analytical expression giving an egocentric version of the asphericity \cite{rud86} $A_2$ of the set, $L$, of memory locations in terms of the memory kernel $\mu$. We have verified our analytical result by simulation, and illustrated how $A_2$ describes the shape of the set $L$, and how this shape is related to the tail behaviour of $\mu$. Specifically, fatter tailed kernels produce more spherical memory sets.

	It has been suggested that future research into animal movement should explore the effects of memory \cite{mor04, patt08}. Many creatures utilise memory in order to efficiently exploit their environments \cite{duk99,rot10,nor09,avg13}, and memory effects are known to effect the nature of foraging or search paths \cite{fag13,fag14,boy12,boy14,oha05}.
Field studies confirm that spatial memory and learning are a critical part of foraging patterns of primates \cite{gar89}. Of particular relevance to the current work, is the fact that the movement of bearded saki monkeys is more consistent with Brownian motion than L\'{e}vy walk \cite{sha14}.
 By selecting this simplest possible path model (Brownian motion), we have been able to show how these paths combine with memory to create a spatial map of the creature's surroundings. The possession of a memory comes with physiological costs \cite{duk99,fag13}, as does searching for food, so one would expect evolution to simultaneously optimise both paths, and the use of memory. The optimal shape and structure of a mental map will depend on the environment and objectives of the forager, so given this optimal shape, one might expect the trajectories to be influenced in part by the need to create an effective map. We therefore suggest that the search for an understanding of the relationship between trajectories, maps, and memory is worthwhile. In particular, finding the asphericity from the field data of, \emph{e.g.}, brearded saki monkeys or other primates forging patterns and fitting the memory kernel from our model would improve the current understanding of their spatial memory.
	Elaborated discussion on impact and scientific importance of mathematical modelling of movement of the foragers has been explored in \cite[Part I, section 1.3]{vis11}.

We have used ellipses as simple approximations to the shapes of a memorised sets, but the ellipse is the simplest deformation of a circle. A natural extension is to consider higher order deformations which more accurately characterise the shape of $L$. Another possibility, which will be the subject of future research, is to consider L\'{e}vy walks and anomalous diffusions, \emph{e.g.} fractional Brownian motion with Hurst parameter $H \in (0, 1) \setminus \{\frac{1}{2} \}$ \cite{bia08}. 

\ack

\addcontentsline{toc}{section}{Acknowledgments}%

The authors thank to anonymous referees for insightful remarks.
The authors thank Thomas Kecker for bringing to their attention Lemma \ref{lem: lim}.
\noindent
The last named author (J.B.) was supported by a Leverhulme Trust Research Fellowship (RF-2016-177) while developing the ideas in this work, and is very grateful for this.

\noindent
The second named author (A.A.) is supported by a PhD scholarship from King Abdulaziz University, Saudi Arabia, and is very grateful for this.

%*************************************************************************** 
%*************************************************************************** 
% 	A P P E N D I C E S
%*************************************************************************** 
%*************************************************************************** 
\begin{appendices}
\section{Limit Formulae}
\label{sect:Conv}

In this section we elaborate on random sums over a Poisson process so that we can prove formulae \ref{eqn:approx_form1} and \ref{eqn:approx_form2}.
 \renewcommand{\thelemma}{\Alph{section}.\arabic{lemma}}
\noindent First let us emphasise a simple observation.
\begin{lemma}\label{lem: lim}
	Let $c >0$, for any $N \in \mathbb{N}$ we have 
\setcounter{equation}{0}
\renewcommand\theequation{A.\arabic{equation}}
	\begin{equation}
	\lim_{c \to \infty} c^N e^{-c} \sum_{k=0}^{\infty} \frac{c^k}{k! (k+1)^N} = 1.
	\end{equation}
\end{lemma}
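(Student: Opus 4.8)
The plan is to recognise the sum as closely related to the normalised incomplete gamma function, or equivalently to use a probabilistic interpretation via a Poisson random variable. Let $K_c$ denote a Poisson random variable with mean $c$. Then the quantity in question is exactly
\[
c^N e^{-c} \sum_{k=0}^{\infty} \frac{c^k}{k!(k+1)^N} = c^N \,\EE\!\left[\frac{1}{(K_c+1)^N}\right] = \EE\!\left[\left(\frac{c}{K_c+1}\right)^N\right].
\]
So the statement reduces to showing $\EE[(c/(K_c+1))^N] \to 1$ as $c \to \infty$. Since $K_c/c \to 1$ in probability (indeed in $L^2$, as $\mathrm{Var}(K_c/c) = 1/c \to 0$) by the law of large numbers applied to $K_c \stackrel{d}{=} \sum_{i=1}^{\lceil c \rceil}(\cdots)$ or simply from Chebyshev, the integrand $c/(K_c+1)$ converges to $1$ in probability, hence so does its $N$-th power by the continuous mapping theorem.

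The main obstacle is upgrading this convergence in probability to convergence of expectations; for this I would establish uniform integrability of the family $\{(c/(K_c+1))^N\}_{c \ge 1}$, or more directly bound the relevant moments. The clean route is to note the elementary inequality $\frac{1}{k+1} \le \frac{2}{k+2}$ and more usefully to compute or bound $\EE[(K_c+1)^{-N}]$ from above and below. For a lower bound on the whole expression, Jensen's inequality applied to the convex function $x \mapsto x^{-N}$ on $(0,\infty)$ gives $\EE[(K_c+1)^{-N}] \ge (\EE[K_c+1])^{-N} = (c+1)^{-N}$, so $c^N\EE[(K_c+1)^{-N}] \ge (c/(c+1))^N \to 1$. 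For the upper bound, I would use a moderate-deviations or direct tail estimate: split according to whether $K_c \ge c/2$ or $K_c < c/2$. On the event $\{K_c \ge c/2\}$ we have $(c/(K_c+1))^N \le 2^N$, and on the complementary event, which has probability at most $e^{-c/8}$ by a standard Chernoff bound for the Poisson distribution, we crudely bound $(c/(K_c+1))^N \le c^N$. Hence
\[
\EE\!\left[\left(\tfrac{c}{K_c+1}\right)^N\right] \le \EE\!\left[\left(\tfrac{c}{K_c+1}\right)^N \mathbf{1}_{\{K_c \ge c/2\}}\right] + c^N e^{-c/8},
\]
and the second term vanishes as $c \to \infty$. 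For the first term, combine the domination by $2^N$ with the fact that $(c/(K_c+1))^N \mathbf{1}_{\{K_c\ge c/2\}} \to 1$ in probability and apply bounded convergence. Together with the Jensen lower bound this sandwiches the limit at $1$.

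An alternative, more hands-on approach avoiding probabilistic language would be to write $\sum_{k\ge 0} \frac{c^k}{k!(k+1)^N}$ by repeatedly using $\frac{1}{k+1} = \int_0^1 t^k\,dt$, turning the sum into an $N$-fold integral $\int_{[0,1]^N} e^{c t_1\cdots t_N}\,dt_1\cdots dt_N$ (after reindexing), then applying Laplace's method / Watson's lemma near the corner $(1,\dots,1)$ to extract the leading asymptotic $\sim e^c c^{-N}$. This works but is messier than the Poisson argument; I would present the probabilistic proof as the main line and perhaps remark on the integral identity. The only genuinely delicate point in either approach is controlling the contribution of the rare event $\{K_c \text{ small}\}$, where the integrand is large ($O(c^N)$) but the probability is exponentially small — the Chernoff bound handles this comfortably.
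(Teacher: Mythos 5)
The paper never proves Lemma \ref{lem: lim}: it is presented as ``a simple observation'' and credited in the acknowledgements to Thomas Kecker, so there is no in-paper argument to compare yours against; judged on its own, your proof is correct. The identity $c^N e^{-c}\sum_{k\ge 0}\frac{c^k}{k!(k+1)^N}=\EE\bigl[\bigl(c/(K_c+1)\bigr)^N\bigr]$ for $K_c$ Poisson with mean $c$ is exact, Jensen's inequality for the convex map $x\mapsto x^{-N}$ on $(0,\infty)$ gives the lower bound $\bigl(c/(c+1)\bigr)^N\to 1$, and your split at $\{K_c\ge c/2\}$ is the right way to handle the only delicate point: on the rare event $\{K_c<c/2\}$ the integrand is only $O(c^N)$ while the Poisson lower-tail Chernoff bound gives probability at most $e^{-c/8}$, so that contribution vanishes, and on $\{K_c\ge c/2\}$ the variable is bounded by $2^N$ and tends to $1$ in probability, so its expectation tends to $1$ (the bounded convergence step with convergence in probability is legitimate via the standard subsequence argument, which you could state explicitly for completeness). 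The sandwich then yields the limit $1$. Your alternative route via $\frac{1}{(k+1)^N}=\int_{[0,1]^N}(t_1\cdots t_N)^k\,dt$, giving $\sum_{k\ge0}\frac{c^k}{k!(k+1)^N}=\int_{[0,1]^N}e^{ct_1\cdots t_N}\,dt$ and a Laplace-type analysis near the corner $(1,\dots,1)$, is also sound (for $N=1$ it reproduces the exact value $(e^c-1)/c$), but as you say the probabilistic argument is cleaner and suffices.
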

\noindent  For the rest of this section let us denote 
\renewcommand\theequation{A.\arabic{equation}}
\begin{equation}I(f, n, \lambda) = \int_0^{\infty} f^n(t) \lambda(t) dt \end{equation}
for a measurable  $f \colon [0, \infty) \rightarrow \mathbb{R}$, $n \in \mathbb{N}$ and $\lambda$ such that $\int_0^{\infty} \lambda(t) dt <\infty$, provided that this integral converges. We also use the following notation for the powers of $I(f, n, \lambda)$, for $k \in \mathbb{N}$ we denote
$$ I^k(f, n, \lambda) := \left(I(f, n, \lambda) \right)^k.$$
 \renewcommand{\thethm}{\Alph{section}.\arabic{thm}}
 \setcounter{thm}{1}
\begin{thm}[Campbell's theorem \cite{str10, king02}]
	Let $S$ be a non-homegenous Poisson point process on $[0, \infty)$ with intensity function $\lambda(t)$.  Then for any measurable $f \colon [0, \infty) \rightarrow \mathbb{R}$ such that
	\begin{equation}\label{eqn:charcond}\int_0^{\infty} \min\{1, |f(t)|\} \lambda(t) dt <\infty \end{equation}
	the characteristic function of 
	$\sum_{t \in S} f(t)$ is given by 
	$$	\varphi(\theta) = \mathbb{E}\left[\exp\left( i \theta \sum_{t \in S} f(t) \right)\right] =
	\exp\left(\int_0^{\infty} \left( e^{i\theta f(t)}-1\right)\lambda(t)dt\right)$$	
provided that the integral on the right-hand side of the formula exists. 
\end{thm}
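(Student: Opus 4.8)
The plan is to verify the formula first for simple integrands, where it reduces to the elementary fact that the point counts of a Poisson process over disjoint sets are independent Poisson variables, and then to obtain the general case by an approximation argument. Throughout, write $\Lambda(B)=\int_B \lambda(t)\,dt$ for a Borel set $B\subseteq[0,\infty)$ and $N(B)=|S\cap B|$; recall that the $N(B_j)$ are independent whenever the $B_j$ are disjoint, and $N(B)\sim\mathrm{Poisson}(\Lambda(B))$ when $\Lambda(B)<\infty$.

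First I would treat a simple function $f=\sum_{j=1}^m a_j\mathbf{1}_{B_j}$ with the $B_j$ disjoint, $\Lambda(B_j)<\infty$, and $f\equiv 0$ outside $\bigcup_j B_j$. Then $\sum_{t\in S}f(t)=\sum_{j=1}^m a_j N(B_j)$ is a finite sum of independent variables, and using the characteristic function $\mathbb{E}[e^{i\theta a N}]=\exp(\Lambda(e^{i\theta a}-1))$ of $aN$ for $N\sim\mathrm{Poisson}(\Lambda)$ one gets
\[
\varphi(\theta)=\prod_{j=1}^m\exp\!\big(\Lambda(B_j)(e^{i\theta a_j}-1)\big)=\exp\!\Big(\int_0^\infty(e^{i\theta f(t)}-1)\lambda(t)\,dt\Big),
\]
the last step because $e^{i\theta f}-1$ vanishes off $\bigcup_j B_j$. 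The same computation with first moments gives the averaging identity $\mathbb{E}\big[\sum_{t\in S}g(t)\big]=\int_0^\infty g(t)\lambda(t)\,dt$ for simple $g\ge0$, which extends to all measurable $g\ge0$ by monotone convergence; I will need this in the next step.

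Next I would check that condition~\eqref{eqn:charcond} makes $\sum_{t\in S}f(t)$ a bona fide random variable by showing $\sum_{t\in S}|f(t)|<\infty$ a.s. Split $[0,\infty)=\{|f|>1\}\cup\{|f|\le 1\}$: the first set has $\Lambda$-measure at most $\int\min\{1,|f|\}\lambda\,dt<\infty$, so $S$ meets it in finitely many points a.s.; on the second set the averaging identity gives $\mathbb{E}\big[\sum_{t\in S,\,|f(t)|\le 1}|f(t)|\big]=\int_{\{|f|\le1\}}|f|\lambda\,dt<\infty$, hence that sum is a.s.\ finite too. Then choose simple functions $f_n$ with $|f_n|\le|f|$, each supported on a set of finite $\Lambda$-measure, and $f_n\to f$ pointwise. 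Applying dominated convergence pathwise to the sum over $S$ (dominating summand $|f|$, with $\sum_{t\in S}|f(t)|<\infty$) gives $\sum_{t\in S}f_n(t)\to\sum_{t\in S}f(t)$ a.s., so $\mathbb{E}[\exp(i\theta\sum_{t\in S}f_n)]\to\varphi(\theta)$ by bounded convergence; on the right-hand side $|e^{i\theta f_n(t)}-1|\le\min\{2,|\theta|\,|f_n(t)|\}\le\max\{2,|\theta|\}\min\{1,|f(t)|\}$ is $\lambda$-integrable, so dominated convergence gives $\int(e^{i\theta f_n}-1)\lambda\,dt\to\int(e^{i\theta f}-1)\lambda\,dt$ and in particular the right-hand integral converges absolutely. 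Combining with the simple-function case applied to each $f_n$ finishes the proof. (For the present paper one has the extra hypothesis $\int_0^\infty\lambda(t)\,dt=c<\infty$; then an even shorter route is to condition on $|S|\sim\mathrm{Poisson}(c)$, use that given $|S|=n$ the points are i.i.d.\ with density $\lambda/c$, and evaluate the resulting exponential generating series.)

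The routine part is the simple-function case; the real content is the passage to the limit, and the step I expect to require the most care is establishing the almost-sure absolute convergence of $\sum_{t\in S}|f(t)|$ from the weak hypothesis~\eqref{eqn:charcond} — this is precisely what supplies the dominating function needed to take limits simultaneously inside the random sum and inside the deterministic integral.
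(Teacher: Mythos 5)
Your proof is correct. Note, however, that the paper does not actually prove this statement: Campbell's theorem is quoted as a known result, with the proof delegated to the cited references (Kingman \emph{et al.}), so there is no in-paper argument to compare against. Your two-stage argument --- first the exact computation for simple functions supported on disjoint sets of finite intensity measure, using independence of the counts $N(B_j)$ and the Poisson characteristic function, then the passage to general $f$ via the a.s.\ absolute convergence of $\sum_{t\in S}|f(t)|$ deduced from $\int_0^\infty\min\{1,|f|\}\lambda\,dt<\infty$ and a double application of dominated convergence (pathwise in the random sum, and with the dominating function $\max\{2,|\theta|\}\min\{1,|f|\}$ in the deterministic integral) --- is precisely the standard textbook proof found in those references, and all the estimates you use are valid. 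The only step left implicit is the existence of simple $f_n$ with $|f_n|\le|f|$, finite-measure support and $f_n\to f$ pointwise; this is routine, since $\Lambda(\{|f|>1/n\})\le n\int_0^\infty\min\{1,|f|\}\lambda\,dt<\infty$, so one may truncate to $\{|f|>1/n\}\cap[0,n]$ and discretise the values dyadically. Your parenthetical shortcut for the situation actually used in the paper (where $\lambda=c\mu$ has finite total mass $c$, so one can condition on $|S|\sim\mathrm{Poisson}(c)$ and use that the points are then i.i.d.\ with density $\mu$) is also correct and is arguably the most economical route for the application at hand, though the general version you prove is what the paper states.
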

\noindent Campbell's theorem has the following simple consequences. 
 \renewcommand{\thecor}{\Alph{section}.\arabic{cor}}
\begin{cor}
	\label{cor:cam}
	Let $f$, $g \colon [0, \infty) \rightarrow \mathbb{R}$ be measurable functions such that (\ref{eqn:charcond}) holds, then 
\begin{align*}
\mathbb{E}\left[ \sum_{t \in S} f(t) \right] =&I(f, 1, \lambda), \\
\mathbb{E}\left[\left( \sum_{t \in S} f(t)\right)^2 \right] =& I^2(f, 1, \lambda) + I(f, 2, \lambda),   \\
\mathbb{E}\left[\left( \sum_{t \in S} f(t)\right)^4 \right] =& I^4(f, 1, \lambda) + 6 I^2(f, 1, \lambda)I(f, 2, \lambda)  +3I^2(f, 2, \lambda)\\& + 4 I(f, 1, \lambda)I(f, 3, \lambda)+ I(f, 4, \lambda) ,  \\
\mathbb{E}\left[ \sum_{t \in S} f(t)  \sum_{s \in S} g(s)\right] =& I(f, 1, \lambda)I(g, 1, \lambda) + I(f\cdot g, 1, \lambda). 
\end{align*}
\end{cor}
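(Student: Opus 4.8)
The plan is to read off all four identities from the characteristic function supplied by Campbell's theorem (Theorem~A.2). Write $Z=\sum_{t\in S} f(t)$, so that
\[
\varphi(\theta)=\mathbb{E}\!\left[e^{i\theta Z}\right]=\exp\!\left(\int_0^\infty\!\left(e^{i\theta f(t)}-1\right)\lambda(t)\,dt\right).
\]
First I would expand $e^{i\theta f(t)}-1=\sum_{n\ge 1}(i\theta f(t))^n/n!$ and interchange the summation with the integral; this is legitimate for $\theta$ in a neighbourhood of $0$ once the integrals $I(f,n,\lambda)$ appearing in the statement are finite, which is implicitly assumed when those quantities are written down (condition~(\ref{eqn:charcond}) already guarantees the $n=1$ term and hence that $\varphi$ is well defined). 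This identifies the cumulant generating function of $Z$ as $\log\varphi(\theta)=\sum_{n\ge 1}\frac{(i\theta)^n}{n!}\,I(f,n,\lambda)$, so the $n$-th cumulant of $Z$ is simply $\kappa_n=I(f,n,\lambda)$.

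Next I would pass from cumulants to moments via the classical moment--cumulant relations (equivalently, Fa\`a di Bruno's formula applied to $\theta\mapsto e^{\log\varphi(\theta)}$ at $\theta=0$, using $\mathbb{E}[Z^k]=i^{-k}\varphi^{(k)}(0)$):
\[
\mathbb{E}[Z]=\kappa_1,\qquad \mathbb{E}[Z^2]=\kappa_2+\kappa_1^2,\qquad \mathbb{E}[Z^4]=\kappa_4+4\kappa_1\kappa_3+3\kappa_2^2+6\kappa_1^2\kappa_2+\kappa_1^4.
\]
Substituting $\kappa_n=I(f,n,\lambda)$ and writing $I^k(f,n,\lambda)=(I(f,n,\lambda))^k$ reproduces the first three displayed formulas verbatim; this is the Bell-polynomial bookkeeping and nothing more.

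For the cross term $\mathbb{E}\!\left[\sum_{t\in S}f(t)\sum_{s\in S}g(s)\right]$ I would polarise: apply the second identity with $f$ replaced by the admissible function $f+g$, and use the elementary expansions $I(f+g,2,\lambda)=I(f,2,\lambda)+2I(f\cdot g,1,\lambda)+I(g,2,\lambda)$ and $I^2(f+g,1,\lambda)=I^2(f,1,\lambda)+2I(f,1,\lambda)I(g,1,\lambda)+I^2(g,1,\lambda)$. Subtracting $\mathbb{E}[(\sum f)^2]$ and $\mathbb{E}[(\sum g)^2]$ from $\mathbb{E}[(\sum(f+g))^2]$ and dividing by two gives $\mathbb{E}[\sum f\sum g]=I(f,1,\lambda)I(g,1,\lambda)+I(f\cdot g,1,\lambda)$, as claimed. (Alternatively, one may apply the bivariate form of Campbell's theorem to the vector $(f,g)$ and take $\partial^2/\partial\theta_1\partial\theta_2$ at the origin.)

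The one genuine subtlety — the step I expect to need the most care — is the term-by-term expansion of $\log\varphi$: one must check that the cumulant series converges near $\theta=0$ and that the moments of $Z$ up to order four are finite, which both reduce to the finiteness of $I(f,n,\lambda)$ for $n\le 4$. Granting that (as the statement tacitly does), every identity follows by substitution into the moment--cumulant formulas.
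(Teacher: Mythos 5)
Your proposal is correct and matches the paper's intent: the paper states these identities as ``simple consequences'' of Campbell's theorem without further detail, and extracting the cumulants $\kappa_n=I(f,n,\lambda)$ from the characteristic function, converting to moments via the moment--cumulant relations, and polarising (or using the bivariate form) for the cross term is exactly the standard argument being invoked. Your attention to the finiteness of $I(f,n,\lambda)$ for $n\le 4$ and to the term-by-term expansion near $\theta=0$ is the right caveat, and it is tacitly assumed in the paper as well.
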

\noindent For the rest of this section we assume that $S$ is a Poisson point process with intensity $\lambda(t) = c \mu(t)$, where $c>0$ and $\mu$ is a memory kernel, that is, a non-increasing density function of a continuous probability distribution supported on $[0, \infty)$ with finite first moment. By considering powers of $1/c$ in Corollary \ref{cor:cam}, we have the following result. 
\begin{cor} \label{cor: lim1}
	Let $f$, $g \colon [0, \infty) \rightarrow \mathbb{R}$ be measurable functions  satisfying (\ref{eqn:charcond}) holds, then 
	\begin{align*}
	\lim_{c \to \infty}\mathbb{E}\left[ \frac{1}{c}\sum_{t \in S} f(t) \right] =&I(f, 1, \mu), \\
	\lim_{c \to \infty}\mathbb{E}\left[\left( \frac{1}{c} \sum_{t \in S} f(t)\right)^2 \right] =& I^2(f, 1, \mu),   \\
	\lim_{c \to \infty}\mathbb{E}\left[\left( \frac{1}{c} \sum_{t \in S} f(t)\right)^4 \right] =& I^4(f, 1, \mu),  \\
	\lim_{c \to \infty}\mathbb{E}\left[ \frac{1}{c} \sum_{t \in S} f(t)   \frac{1}{c}\sum_{s \in S} g(s)\right] =& I(f, 1, \mu)I(g, 1, \mu). 
	\end{align*}
\end{cor}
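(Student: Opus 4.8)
The statement to prove is Corollary~\ref{cor: lim1}, which follows from Corollary~\ref{cor:cam} (itself a consequence of Campbell's theorem) by dividing by appropriate powers of $c$ and taking $c \to \infty$, using that the intensity is $\lambda(t) = c\mu(t)$. The first observation is that $I(f,n,c\mu) = c\, I(f,n,\mu)$ by linearity of the integral in the intensity, and hence $I^k(f,n,c\mu) = c^k I^k(f,n,\mu)$. Substituting this into the four identities of Corollary~\ref{cor:cam} expresses each raw moment of $\sum_{t\in S} f(t)$ as a polynomial in $c$ whose coefficients are products of the $c$-independent quantities $I(f,n,\mu)$.

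\textbf{Step 1: identify the leading term.} For $\mathbb{E}[(\sum_{t\in S} f(t))^m]$ with $m \in \{1,2,4\}$, the expansion from Corollary~\ref{cor:cam} has a unique term of top degree $c^m$, namely $I^m(f,1,c\mu)/\text{(nothing)} = c^m I^m(f,1,\mu)$; every other term is $O(c^{m-1})$ or lower (each extra factor $I(f,n,\mu)$ with $n\ge 2$ replaces one power of $c$ with a constant, and likewise for the mixed term $I(f\cdot g,1,\mu)$). Explicitly, for $m=4$: $\mathbb{E}[(\sum f)^4] = c^4 I^4(f,1,\mu) + 6 c^3 I^2(f,1,\mu) I(f,2,\mu) + O(c^2)$.

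\textbf{Step 2: divide and take the limit.} Then $\mathbb{E}[(\tfrac1c \sum_{t\in S} f(t))^m] = c^{-m}\mathbb{E}[(\sum_{t\in S} f(t))^m] = I^m(f,1,\mu) + O(c^{-1}) \to I^m(f,1,\mu)$ as $c\to\infty$, which gives the first three displayed limits (the cases $m=1,2,4$). For the mixed moment, Corollary~\ref{cor:cam} gives $\mathbb{E}[\sum_{t\in S} f(t)\sum_{s\in S} g(s)] = c^2 I(f,1,\mu)I(g,1,\mu) + c\, I(f\cdot g,1,\mu)$, so dividing by $c^2$ yields $I(f,1,\mu)I(g,1,\mu) + O(c^{-1}) \to I(f,1,\mu)I(g,1,\mu)$, the fourth displayed limit. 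One should note that the hypothesis that $f,g$ satisfy~(\ref{eqn:charcond}) (together with $\int_0^\infty \mu\,dt < \infty$, which makes~(\ref{eqn:charcond}) hold for $c\mu$ as well) guarantees all the integrals $I(f,n,\mu)$ appearing are finite, so the expansions are legitimate and the error terms really are finite multiples of powers of $c$.

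\textbf{Main obstacle.} There is essentially no hard step here — the corollary is a bookkeeping exercise once Corollary~\ref{cor:cam} is in hand. The only point requiring a little care is the justification that the subleading terms are genuinely lower order in $c$ and that the relevant integrals converge; this is where one invokes $\int_0^\infty \mu < \infty$ and condition~(\ref{eqn:charcond}). (Interestingly, Lemma~\ref{lem: lim} is \emph{not} needed for this corollary; it will be needed later to handle the random normalisation $1/(1+|S|)$ rather than the deterministic $1/c$ used here.)
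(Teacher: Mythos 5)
Your proof is correct and is essentially the paper's own argument: the paper simply states that the corollary follows ``by considering powers of $1/c$ in Corollary~\ref{cor:cam}'', which is exactly your substitution $I(f,n,c\mu)=c\,I(f,n,\mu)$, division by the appropriate power of $c$, and passage to the limit. Your closing remark that Lemma~\ref{lem: lim} is not needed here but only for the random normalisation $1/(1+|S|)$ also matches the paper's structure.
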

\noindent Our next corollary is required to prove formulae (\ref{eqn:approx_form1}) and (\ref{eqn:approx_form2}). 
\begin{cor} 
	Let $f$, $g \colon [0, \infty) \rightarrow \mathbb{R}$ be measurable functions satisfying (\ref{eqn:charcond}) holds, then 
	\label{cor: lim2}
	\begin{enumerate}
		\item[i)]
		$$\lim_{c \to \infty }\mathbb{E}\left[\frac{1}{1+|S|} \sum_{t \in S} f(t)\right] =I(f, 1, \mu),$$
		\item[ii)] $$\lim_{c \to \infty }\mathbb{E}\left[\left(\frac{1}{1+|S|} \sum_{t \in S} f(t)\right)^2\right] =  I^2(f, 1, \mu),$$
		\item[iii)] $$\lim_{c \to \infty }\mathbb{E}\left[\frac{1}{1+|S|} \sum_{t \in S} f(t)\frac{1}{1+|S|} \sum_{s \in S} g(s) \right] = I(f, 1, \mu)I(g, 1, \mu).$$
	\end{enumerate}
\end{cor}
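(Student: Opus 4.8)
The plan is to condition on the total number of memorised points. Since $\int_0^\infty c\mu(t)\,dt = c$, the count $|S|$ is Poisson distributed with mean $c$, and in particular a.s.\ finite, so $\frac{1}{1+|S|}$ is well defined and each $\sum_{t\in S}f(t)$ is a.s.\ a finite sum. By the order-statistics property of a Poisson process, conditionally on $\{|S|=n\}$ the points of $S$ are i.i.d.\ with common density $\mu$ (recall $\mu$ is a probability density). This turns every left-hand side into a Poisson-weighted series of elementary i.i.d.\ moments, after which Lemma \ref{lem: lim} supplies the required $c\to\infty$ asymptotics of those series. Throughout I assume, as implicitly elsewhere in this appendix, that the integrals $I(f,1,\mu)$, $I(f,2,\mu)$, $I(g,1,\mu)$, $I(f\cdot g,1,\mu)$ appearing on the right converge absolutely, so that the conditional moments below are finite.

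For (i), conditioning on $\{|S|=n\}$ gives $\mathbb{E}\big[\sum_{t\in S}f(t)\bigm|\,|S|=n\big]=n\,I(f,1,\mu)$, hence
$$\mathbb{E}\!\left[\frac{1}{1+|S|}\sum_{t\in S}f(t)\right] = I(f,1,\mu)\sum_{n=0}^\infty e^{-c}\frac{c^n}{n!}\,\frac{n}{n+1} = I(f,1,\mu)\left(1-e^{-c}\sum_{n=0}^\infty\frac{c^n}{n!\,(n+1)}\right),$$
where I used $\frac{n}{n+1}=1-\frac{1}{n+1}$. By Lemma \ref{lem: lim} with $N=1$ the correction term is $O(c^{-1})$, the bracket tends to $1$, and (i) follows.

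For (ii) and (iii), the conditional second moments are
$$\mathbb{E}\!\left[\Big(\sum_{t\in S}f(t)\Big)^{2}\,\Big|\,|S|=n\right] = n\,I(f,2,\mu)+n(n-1)\,I^2(f,1,\mu),$$
$$\mathbb{E}\!\left[\sum_{t\in S}f(t)\sum_{s\in S}g(s)\,\Big|\,|S|=n\right] = n\,I(f\cdot g,1,\mu)+n(n-1)\,I(f,1,\mu)I(g,1,\mu).$$
Dividing by $(1+n)^2$ and averaging against the Poisson weights, both statements reduce to showing $\sum_{n\ge0}e^{-c}\frac{c^n}{n!}\frac{n}{(n+1)^2}\to0$ and $\sum_{n\ge0}e^{-c}\frac{c^n}{n!}\frac{n(n-1)}{(n+1)^2}\to1$ as $c\to\infty$. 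The first follows from $\frac{n}{(n+1)^2}\le\frac{1}{n+1}$ and Lemma \ref{lem: lim} with $N=1$; the second follows from the algebraic identity $\frac{n(n-1)}{(n+1)^2}=1-\frac{3}{n+1}+\frac{2}{(n+1)^2}$ together with Lemma \ref{lem: lim} for $N=1$ and $N=2$, which kills both correction series. Consequently the $I(f,2,\mu)$ term (resp.\ the $I(f\cdot g,1,\mu)$ term) vanishes in the limit and only $I^2(f,1,\mu)$ (resp.\ $I(f,1,\mu)I(g,1,\mu)$) survives, giving (ii) and (iii).

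I expect the only real obstacle to be bookkeeping rather than ideas: one must justify the exchange of expectation and conditional decomposition (legitimate because $|S|<\infty$ a.s.\ and the integrals converge absolutely), and then pass to the limit term by term in the Poisson series — which is precisely what Lemma \ref{lem: lim} delivers, presumably the reason it was isolated at the top of the appendix. The rest is the routine computation of first and second moments of i.i.d.\ sums. A tempting shortcut is to compare $\frac{1}{1+|S|}$ with $\frac1c$ directly and cite Corollary \ref{cor: lim1}, but since $|S|$ and $\sum_{t\in S}f(t)$ are dependent this would still require a Cauchy--Schwarz / uniform-integrability step, so the conditioning argument seems cleanest.
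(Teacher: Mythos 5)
Your proof is correct, but it takes a genuinely different route from the paper. The paper never conditions on $|S|$: it compares $\tfrac{1}{1+|S|}$ with $\tfrac{1}{c}$, bounds the difference of expectations via the Cauchy--Schwarz inequality by factors of the form $\bigl(\mathbb{E}\bigl[(\tfrac{c^2}{(1+|S|)^2}-1)^{2}\bigr]\bigr)^{1/2}$ (and a fourth power for part (iii)), kills those factors with Lemma \ref{lem: lim}, and handles the remaining $\tfrac{1}{c}$-normalised moments through Campbell's theorem via Corollary \ref{cor: lim1} --- i.e.\ exactly the ``tempting shortcut'' you set aside, carried out with the Cauchy--Schwarz step you anticipated it would need. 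You instead use the order-statistics property of a finite-intensity Poisson process (conditionally on $\{|S|=n\}$ the points are i.i.d.\ with density $\mu$), obtain exact conditional first and second moments, and evaluate the resulting Poisson-weighted series with Lemma \ref{lem: lim}; the identities $\tfrac{n}{n+1}=1-\tfrac{1}{n+1}$ and $\tfrac{n(n-1)}{(n+1)^2}=1-\tfrac{3}{n+1}+\tfrac{2}{(n+1)^2}$ and the asymptotics you quote are all correct. What your route buys: exact pre-limit expressions, no appeal to Campbell's theorem, and weaker integrability demands (you only need $I(|f|,2,\mu)$ and $I(|f|\cdot|g|,1,\mu)$ finite, whereas the paper's Cauchy--Schwarz/fourth-moment argument implicitly needs $I(|f|,k,\mu)$ up to $k=4$; note that neither set of conditions follows from (\ref{eqn:charcond}) alone, a hypothesis gap shared by both arguments, which you at least flag explicitly). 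What the paper's route buys: it reuses the Campbell-theorem machinery already established for Corollary \ref{cor: lim1}, and the single Cauchy--Schwarz template extends mechanically to higher moments and to the subsequent conditioning on the Brownian path. The only housekeeping your write-up should make explicit is the citation of the conditional i.i.d.\ representation (valid because the total intensity $c$ is finite) and the absolute convergence justifying the conditioning decomposition, both of which you acknowledge.
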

\begin{proof}
	
\begin{itemize}
	\item[i)] Noting Corollary \ref{cor: lim1}, it is enough to show that 
	$$\mathbb{E}\left[ \frac{1}{1+|S|} \sum_{t \in S} f(t) \right] - \mathbb{E}\left[ \frac{1}{c}\sum_{t \in S} f(t) \right] \stackrel{c \to \infty}{\rightarrow } 0.$$

To show this, we apply the Cauchy--Schwarz inequality \cite{grim01}, we obtain
	\begin{align*}
	\left|\mathbb{E}\left[ \frac{1}{1+|S|} \sum_{t \in S} f(t) \right] - \mathbb{E}\left[ \frac{1}{c}\sum_{t \in S} f(t) \right]\right| \leq& 
	\mathbb{E}\left[  \left| \left(\frac{c}{1+|S|} - 1\right)\right|\frac{1}{c}\sum_{t \in S} |f(t)|\right]\\\leq& 
	\left(\mathbb{E}\left[  \left(\frac{c}{1+|S|} - 1\right)^2\right] \right)^{\frac{1}{2}} \\&\cdot \left(\mathbb{E}\left[\left(\frac{1}{c}\sum_{t \in S} |f(t)|\right)^2 \right]\right)^{\frac{1}{2}}.
	\end{align*}
	Corollary  \ref{cor: lim1} implies that the second term of the product converges to $I(|f|, 1, \mu)$, and so it is enough to show that the first term converges to $0$. 
By employing the law of total expectation and Lemma \ref{lem: lim} we obtain  
\begin{align*}
\mathbb{E}\left[ \left(\frac{c}{1+|S|} - 1\right)^2\right] =& \mathbb{E}\left[  \frac{c^2}{(1+|S|)^2} -  \frac{2c}{1+|S|} + 1\right]\\
=& 1+ c^2\sum_{k =0}^{\infty} \frac{1}{(1+k)^2}\mathbb{P}(|S|=k) - 
2 c \sum_{k =0}^{\infty} \frac{1}{1+k}\mathbb{P}(|S|=k) \\
=& 1 + c^2 e^{-c} \sum_{k=0}^{\infty} \frac{c^{k}}{k!(k+1)^2} - 2ce^{-c} \sum_{k=0} \frac{c^{k}}{k!(k+1)}\\
\stackrel{c \to \infty}{\to}& 0.
\end{align*}
\item[ii)] Similarly to i) we have
\begin{align*}
&\left|\mathbb{E}\left[ \left(\frac{1}{1+|S|} \sum_{t \in S} f(t)\right)^2 \right] - \mathbb{E}\left[ \left(\frac{1}{c}\sum_{t \in S} f(t)\right)^2 \right]\right|\\ \leq &
\mathbb{E}\left[  \left| \left(\frac{c^2}{(1+|S|)^2} - 1\right)\right|\frac{1}{c^2}\left(\sum_{t \in S} f(t)\right)^2\right]\\\leq &
\left(\mathbb{E}\left[ \left(\frac{c^2}{(1+|S|)^2} - 1\right)^2\right] \right)^{\frac{1}{2}} \left(\mathbb{E}\left[\left(\frac{1}{c}\sum_{t \in S} f(t)\right)^4 \right]\right)^{\frac{1}{2}}.
\end{align*}
The second term of the product converges to $I^2(f, 1, \mu)$, so we show (using Lemma \ref{lem: lim}) that the first term converges to $0$,  
\begin{align*}
\mathbb{E}\left[ \left(\frac{c^2}{(1+|S|)^2} - 1\right)^2\right] =& 
1 - 2c^2 \mathbb{E}\left[ \frac{1}{(1+|S|)^2} \right] + c^4 \mathbb{E}\left[\frac{1}{(1+|S|)^4} \right] \\
=& 1 - 2c^2e^{-c} \sum_{k=0}^{\infty} \frac{c^k}{k!(k+1)^2} + c^4e^{-c} \sum_{k=0}^{\infty} \frac{c^k}{k!(k+1)^4}\\
\stackrel{c \to \infty}{\to}& 0.
\end{align*}
\item[iii)] The third part of Corollary \ref{cor: lim2} is proved in a similar way. We have
\begin{align*}
\left|\mathbb{E}\left[\frac{1}{1+|S|} \sum_{t \in S} f(t)\frac{1}{1+|S|} \sum_{s \in S} g(s) \right]- \mathbb{E}\left[\frac{1}{c} \sum_{t \in S} f(t)\frac{1}{c} \sum_{s \in S} g(s) \right]\right|  \\\leq 
\mathbb{E} \left[ \left|\left(\frac{c^2}{(1 +|S|)^2} - 1 \right)\right|   \frac{1}{c} \sum_{t \in S} |f(t)| \frac{1}{c} \sum_{s \in S} |g(s)| \right] \\ \leq
\left(\mathbb{E}\left[ \left(\frac{c^2}{(1 +|S|)^2} - 1 \right)^2  \frac{1}{c^2} \left(\sum_{t \in S} |f(t)|\right)^2 \right] \right)^{\frac{1}{2}} \left(\mathbb{E}\left[ \left(\frac{1}{c} \sum_{s \in S} |g(s)| \right)^2 \right] \right)^{\frac{1}{2}}.
\end{align*}
The last term of the product clearly converges to $I(|g|, 1, \mu)$, thus it is enough to show that the square of first term converges to $0$. We apply, Cauchy--Schwartz inequality again, to get
\begin{align*}
\mathbb{E}\left[ \left(\frac{c^2}{(1 +|S|)^2} - 1 \right)^2  \frac{1}{c^2} \left(\sum_{t \in S} |f(t)|\right)^2 \right] \leq& \left(
\mathbb{E}\left[ \left(\frac{c^2}{(1 +|S|)^2} - 1 \right)^4 \right]\right)^{\frac{1}{2}}\\&\cdot \left( \mathbb{E}\left[   \left(\frac{1}{c}\sum_{t \in S} |f(t)|\right)^4 \right]\right)^{\frac{1}{2}}.
\end{align*}
Here, again, the second term converges to $I^2(|f|, 1, \mu)$, so we show that the first term converges to $0$, 
\begin{align*}
&\mathbb{E}\left[ \left(\frac{c^2}{(1 +|S|)^2} - 1 \right)^4 \right] \\=&
\mathbb{E}\left[ \frac{c^8}{(1+|S|)^8} - 4 \frac{c^6}{(1+|S|)^6}+6 \frac{c^4}{(1+|S|)^4}-4 \frac{c^2}{(1+|S|)^2} + 1 \right]
\\=& 1 + c^8e^{-c}\sum_{k\geq 0} \frac{c^k}{k!(k+1)^8}-4c^6e^{-c}\sum_{k\geq 0} \frac{c^k}{k!(k+1)^6} + 6 c^4 e^{-c}\sum_{k\geq 0} \frac{c^k}{k!(k+1)^4} \\&- 4c^2e^{-c}\sum_{k\geq 0} \frac{c^k}{k!(k+1)^2} \\
\stackrel{c \to \infty}{\to}& 0.
\end{align*}
\end{itemize}
\end{proof}
\noindent Our main result in this section is a consequence following from Corollary \ref{cor: lim2}. 
\begin{cor}
	Let $(X(t), Y(t))_{t \geq 0}$ be a two-dimensional standard Brownian motion. 
	For polynomials $p_1(x, y)= x^2$, $p_2(x, y)=y^2$ and $p_{3}(x, y)=xy$, as $c \to \infty$, we have
	\begin{align}
	\mathbb{E}\left[ \left(\frac{1}{1+|S|}\sum_{t \in S} p_i(X(t),Y(t))\right)^{k} \right] \ra \mathbb{E}\left[\left(\int_0^\infty  p_i(X(s),Y(s)) \mu(s)ds\right)^{k}\right],
	\\
	\mathbb{E}\left[\frac{1}{1+|S|}\sum_{t \in S} X^2(t) \frac{1}{1+|S|}\sum_{s \in S} Y^2(s) \right]\ra \mathbb{E}\left[\int_0^\infty  X^2(s) \mu(s)ds \right]\mathbb{E}\left[\int_0^\infty  Y^2(s) \mu(s)ds \right]  
	\end{align}
	for $k \in \{1, 2\}$ and $i \in \{1, 2, 3\}$. In particular, $\int_0^\infty p_i(X(s),Y(s)) \mu(s)ds$ is well-defined since the distribution $\mu$ has finite first moment.
\end{cor}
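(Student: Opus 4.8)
The plan is to deduce this corollary directly from Corollary \ref{cor: lim2} by identifying, for each case, the appropriate measurable functions $f$ and $g$ and checking that the integrability condition (\ref{eqn:charcond}) holds almost surely for the Brownian sample paths. The key observation is that each summand $p_i(X(t),Y(t))$ is, along a fixed Brownian path $\omega$, a deterministic measurable function of $t$: write $f_\omega(t) = p_i(X(t,\omega), Y(t,\omega))$. So we work conditionally on the path, apply Corollary \ref{cor: lim2} pathwise, and then take expectations over the Brownian motion, using a dominated-convergence argument to pass the limit through the outer expectation.

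First I would verify the hypothesis (\ref{eqn:charcond}) for $f_\omega$. Since $|p_i(x,y)| \le x^2 + y^2$ and $\mu$ is a density with finite first moment, it suffices to note that $\int_0^\infty \min\{1, X^2(t)+Y^2(t)\}\,\mu(t)\,dt \le \int_0^\infty \mu(t)\,dt = 1 < \infty$ pathwise, so the condition holds for every $\omega$; moreover $\int_0^\infty (X^2(t)+Y^2(t))\mu(t)\,dt$ has finite expectation equal to $2\int_0^\infty t\,\mu(t)\,dt < \infty$ by Fubini and $\EE[X^2(t)] = t$, so the limiting right-hand sides are all well-defined and finite. Next, for part (i) of the corollary with $k=1$, apply Corollary \ref{cor: lim2}(i) with $f = f_\omega$; for $k=2$ apply Corollary \ref{cor: lim2}(ii) with the same $f$; for the cross term apply Corollary \ref{cor: lim2}(iii) with $f_\omega(t) = X^2(t,\omega)$ and $g_\omega(t) = Y^2(t,\omega)$. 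In each case the pathwise limit is exactly the claimed integral expression (inside the outer expectation).

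The remaining step is to interchange $\lim_{c\to\infty}$ with the outer Brownian expectation. For this I would exhibit a $c$-uniform integrable dominating function. The natural candidate comes from the Cauchy--Schwarz bounds already used in the proof of Corollary \ref{cor: lim2}: the conditional expectations (given the path) of $\big(\tfrac{1}{1+|S|}\sum_{t\in S} f_\omega(t)\big)^k$ are bounded, uniformly in $c$, by a constant times powers of $\int_0^\infty |f_\omega(t)|\,\mu(t)\,ds = \int_0^\infty (X^2+Y^2)(t)\,\mu(t)\,dt$ (using $\EE[(c^2/(1+|S|)^2-1)^m]$ is bounded in $c$ and $\EE[(c^{-1}\sum|f_\omega|)^{2k}]$ is controlled by $I^{2k}(|f_\omega|,1,\mu) + (\text{lower-order in }1/c)$ via Corollary \ref{cor:cam}). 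Since $\int_0^\infty (X^2+Y^2)(t)\,\mu(t)\,dt$ has finite moments of all orders under the Brownian expectation (again Fubini plus Gaussian moment bounds and finiteness of $\int t\mu(t)dt$, $\int t^2\mu(t)dt$ being needed only up to the relevant power; in fact one only needs $k \le 2$ here), the dominated convergence theorem applies and the interchange is justified. The main obstacle is precisely this last point: making the domination uniform in $c$ and checking the Brownian-side integrability, which requires being slightly careful about which moments of $\mu$ are invoked — but since the paper only needs $k \in \{1,2\}$ and the polynomials are quadratic, the required control reduces to finiteness of the first moment of $\mu$ (as already assumed) together with the elementary Gaussian fourth-moment bound $\EE[(X^2(s)+Y^2(s))^2] = O(s^2)$, handled by a further Cauchy--Schwarz / Fubini estimate.
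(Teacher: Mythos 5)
Your overall route is the same as the paper's: condition on the Brownian path (tower property), apply Corollary \ref{cor: lim2} pathwise with $f_\omega(t)=p_i(X(t,\omega),Y(t,\omega))$, then pass the limit through the outer expectation; the verification of (\ref{eqn:charcond}) and the choice of $f$, $g$ for the cross term are fine. The gap is in the one place where you go beyond the paper's two-line sketch, namely the dominated-convergence step. The uniform-in-$c$ bound you invoke comes from Cauchy--Schwarz together with the fourth-moment identity of Corollary \ref{cor:cam} applied to $f_\omega$, and that identity contains the terms $I(f_\omega^j,1,\mu)=\int_0^\infty |p_i(X(t),Y(t))|^j\mu(t)\,dt$ for $j=2,3,4$, i.e.\ integrals of $X^4$, $X^6$, $X^8$ against $\mu$. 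Their finiteness (pathwise, let alone in expectation) does not follow from $\mu$ having a finite first moment: since $\mathbb{E}[X^{2j}(t)]\propto t^j$, they require moments of $\mu$ up to order four. Likewise your closing claim that the control ``reduces to finiteness of the first moment of $\mu$ together with $\mathbb{E}[(X^2(s)+Y^2(s))^2]=O(s^2)$'' already presupposes $\int_0^\infty s^2\mu(s)\,ds<\infty$, which is not among the standing assumptions and is violated by the paper's own Lomax examples with shape $a\le 2$. For such kernels the left-hand side of the $k=2$ statement is in fact infinite for every finite $c$ (condition on $|S|=1$: the contribution is proportional to $\mathbb{E}[X^4(t_1)]\propto\int_0^\infty t^2\mu(t)\,dt$), so no domination argument can succeed under the first-moment hypothesis alone.

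So the strategy matches the paper, but the dominating function you propose is not integrable under the stated hypotheses, and the interchange of limit and expectation --- the very step you identify as the main obstacle --- is not actually secured. To close it you must either impose the missing moment conditions on $\mu$ (for this route: a finite second moment for $k=1$ and the cross term, and control up to the fourth moment for the $k=2$ bound), or argue differently, e.g.\ condition on the Poisson process first (given $|S|=n$ the points are i.i.d.\ with density $\mu$) and use the explicit Gaussian moments such as $\mathbb{E}[X^2(t)X^2(s)]=ts+2\min(t,s)^2$, which makes the $c\to\infty$ limit transparent and also makes visible exactly which moments of $\mu$ are genuinely needed. In fairness, the paper's own one-line proof glosses over the same point, so this weakness is inherited rather than introduced; but since the interchange is precisely the detail you add, your argument as written has a genuine gap there.
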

\begin{proof}
	This result can be obtained by applying the tower property \cite{grim01} of the expectation by conditioning on the Brownian paths and applying the techniques used in the proof of Corollary \ref{cor: lim2}.
\end{proof}

\noindent  For more details on the properties of moments of random sum  over a Poisson process we refer the reader to \cite{str10, king02, bacc02}.

\section{Additional Lemmas}
\label{sect:Lemmas}

Let $\mu$ be a memory kernel, we denote its integral by $M(t) = \int_0^t \mu(s) ds$ for all $t \geq 0$. 

\noindent In order to complete the proof of Theorem \ref{thm: main_thm} we require a series of lemmas; these lemmas use standard techniques of It\^{o} stochastic calculus \cite{oks10}.
 \renewcommand{\thelemma}{\Alph{section}.\arabic{lemma}}
\setcounter{lemma}{0}  
\begin{lemma}\label{lem: app.square}
	Let $W$ be a one-dimensional standard Brownian motion then
	\begin{align*}
	\int_{0}^{t} W^2(s)\mu(s)ds = M(t)W^2(t) - 2 \int_0^t  M(s)W(s) dW(s) - \int_0^t M(s)ds \ \quad (t \geq 0).
	\end{align*} 
\end{lemma}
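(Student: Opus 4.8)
The plan is to recognise the claimed identity as nothing more than the integration-by-parts (product) rule of It\^o calculus applied to the process $t \mapsto M(t)W^2(t)$, followed by rearrangement. Since $M$ is continuous and of finite variation on compacts (indeed $M' = \mu$ a.e.\ and $0 \le M \le 1$), while $W^2$ is an It\^o process, the cross-variation term between $M$ and $W^2$ vanishes, so the product rule reads
\begin{equation*}
d\bigl(M(t)W^2(t)\bigr) = W^2(t)\, dM(t) + M(t)\, d\bigl(W^2(t)\bigr) = \mu(t)W^2(t)\,dt + M(t)\, d\bigl(W^2(t)\bigr).
\end{equation*}

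Next I would compute $d(W^2(t))$ by the one-dimensional It\^o formula applied to $x \mapsto x^2$: this gives $d(W^2(t)) = 2W(t)\,dW(t) + dt$. Substituting,
\begin{equation*}
d\bigl(M(t)W^2(t)\bigr) = \mu(t)W^2(t)\,dt + 2M(t)W(t)\,dW(t) + M(t)\,dt.
\end{equation*}
Integrating both sides from $0$ to $t$ and using the initial conditions $M(0) = 0$ and $W(0) = 0$ (so the left-hand side integrates to $M(t)W^2(t)$), then isolating $\int_0^t \mu(s)W^2(s)\,ds$, yields exactly the stated formula.

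Before asserting these manipulations I would check that the It\^o integral $\int_0^t M(s)W(s)\,dW(s)$ is genuinely well-defined (as an $L^2$-martingale), i.e.\ that $\mathbb{E}\int_0^t M^2(s)W^2(s)\,ds < \infty$; this is immediate from $0 \le M \le 1$ and $\mathbb{E}\int_0^t W^2(s)\,ds = t^2/2 < \infty$. I would also note that the deterministic integral $\int_0^t M(s)\,ds$ is finite for the same reason, and $\int_0^t \mu(s)W^2(s)\,ds$ is finite almost surely since $W^2$ is pathwise bounded on $[0,t]$ and $\mu$ is integrable. The only mildly delicate point is the justification of the product rule when one factor ($M$) is merely absolutely continuous rather than $C^1$; this is handled by the standard It\^o product formula for a continuous semimartingale times a continuous finite-variation process (equivalently, approximate $\mu$ by bounded continuous kernels and pass to the limit), and I expect this bookkeeping to be the only real obstacle — everything else is a direct computation.
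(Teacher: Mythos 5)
Your argument is correct and is essentially the paper's own proof: the paper simply invokes It\^o's lemma with $f(t,s)=M(t)s^2$, which is exactly your product-rule computation (finite-variation factor $M$ times the It\^o process $W^2$, with $d(W^2(t)) = 2W(t)\,dW(t)+dt$). Your additional care about the well-definedness of the stochastic integral and about $M$ being merely absolutely continuous is sound but goes beyond what the paper records.
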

\begin{proof}
	Use It\^{o}'s lemma
	with $f(t,s)=M(t)s^2$. 
\end{proof}
\begin{lemma} \label{lemma: app.main1}
	Let $W$ be a one-dimensional standard Brownian motion. The first two moments of $\int_0^t  W^2(s)\mu(s)ds$ for all $t \geq 0$ are
	\begin{align*}
	\mathbb{E}\left[\int_{0}^{t} W^2(s)\mu(s)ds  \right] =&% M(t)t - \int_0^t M(s)ds=
	\int_0^t s\mu(s)ds,\\
	\mathbb{E}\left[\left(\int_{0}^{t} W^2(s)\mu(s)ds\right)^2  \right] =&
	3t^2M^2(t) + 4 \int_0^t s M^2(s)ds + \left( \int_0^t M(s)ds\right)^2 \\&- 8 M(t)\int_0^t sM(s) ds - 2tM(t)\int_0^t M(s) ds.
	%\\
	%\mathbb{V}\left[\int_0^t \mu(s) W^2(s)ds \right] =&
	%2t^2M^2(t) +4\left(t\int_0^t M^2(t) ds - \int_0^t \left(\int_0^s %M^2(z)dz \right) ds \right) \\
	%&-4t M(t) \int_0^t M(s) ds +4M(t)\int_0^t\left(\int_0^s M(z) dz \right) ds.
	\end{align*}
\end{lemma}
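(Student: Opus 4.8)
The plan is to build directly on Lemma \ref{lem: app.square}, which represents
$$\int_0^t W^2(s)\mu(s)\,ds = M(t)W^2(t) - 2N(t) - D(t),$$
where $N(t) := \int_0^t M(s)W(s)\,dW(s)$ is an It\^o integral (a square-integrable martingale, hence mean zero) and $D(t) := \int_0^t M(s)\,ds$ is deterministic. Throughout, the integrability needed to apply Tonelli's theorem and the It\^o isometry is guaranteed by $0 \le M(s) \le 1$ together with the finiteness of the polynomial moments $\mathbb{E}[W^{2k}(s)]$; I would note this once at the outset.

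For the first moment the quickest route bypasses Lemma \ref{lem: app.square} entirely: by Tonelli and $\mathbb{E}[W^2(s)] = s$,
$$\mathbb{E}\!\left[\int_0^t W^2(s)\mu(s)\,ds\right] = \int_0^t \mathbb{E}[W^2(s)]\,\mu(s)\,ds = \int_0^t s\mu(s)\,ds.$$
(Taking expectations in Lemma \ref{lem: app.square} instead gives $tM(t) - \int_0^t M(s)\,ds$, which agrees after one integration by parts — a useful consistency check.)

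For the second moment I would square the identity of Lemma \ref{lem: app.square},
\begin{align*}
\left(\int_0^t W^2(s)\mu(s)\,ds\right)^2 &= M^2(t)W^4(t) + 4N^2(t) + D^2(t) \\
&\quad - 4M(t)W^2(t)N(t) - 2D(t)M(t)W^2(t) + 4D(t)N(t),
\end{align*}
and take expectations term by term. Five of the six terms are immediate: $\mathbb{E}[W^4(t)] = 3t^2$ gives $3t^2M^2(t)$; the It\^o isometry gives $\mathbb{E}[N^2(t)] = \int_0^t M^2(s)\mathbb{E}[W^2(s)]\,ds = \int_0^t sM^2(s)\,ds$, contributing $4\int_0^t sM^2(s)\,ds$; the $D^2(t)$ term is $\big(\int_0^t M(s)\,ds\big)^2$; the $-2D(t)M(t)W^2(t)$ term gives $-2tM(t)\int_0^t M(s)\,ds$ using $\mathbb{E}[W^2(t)] = t$; and $\mathbb{E}[4D(t)N(t)] = 0$ since $\mathbb{E}[N(t)] = 0$.

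The one term that needs real work — and the main obstacle — is the cross term $\mathbb{E}[W^2(t)N(t)]$, a product of a non-martingale with an It\^o integral. I would resolve it by writing $W^2(t) = 2\int_0^t W(s)\,dW(s) + t$ (It\^o's formula for $x^2$), so that
$$\mathbb{E}[W^2(t)N(t)] = 2\,\mathbb{E}\!\left[\int_0^t W(s)\,dW(s)\int_0^t M(s)W(s)\,dW(s)\right] + t\,\mathbb{E}[N(t)];$$
the last expectation vanishes, and the polarised It\^o isometry converts the first into $2\int_0^t M(s)\mathbb{E}[W^2(s)]\,ds = 2\int_0^t sM(s)\,ds$. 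Hence the fourth term contributes $-8M(t)\int_0^t sM(s)\,ds$, and summing the five nonzero contributions produces exactly the claimed expression for $\mathbb{E}\big[(\int_0^t W^2(s)\mu(s)\,ds)^2\big]$.
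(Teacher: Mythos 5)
Your proposal is correct and follows essentially the same route as the paper's (sketched) proof: Fubini/Tonelli with $\mathbb{E}[W^2(s)]=s$ for the first moment, and for the second moment squaring the representation of Lemma \ref{lem: app.square}, handling the cross term via $W^2(t)=t+2\int_0^t W(s)\,dW(s)$ and the It\^o isometry. You simply carry out in full detail the computation the paper leaves to the reader, and all six terms evaluate as you claim.
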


\begin{proof}
	
	The formula for the expectation is a simple consequence of Fubini's theorem. For the second moment use Lemma \ref{lem: app.square}, the fact that $W^2(t) = t+ 2 \int_0^t W(s)dW(s)$ and It\^{o}'s isometry. 
\end{proof}
\begin{lemma} \label{lemma: app.A.3}
	Let $W_1$ and $W_2$ be two independent one-dimensional standard Brownian motions. Then 
	\begin{align*}
	\int_0^t W_1(s)W_2(s)  \mu(s)ds = \int_0^t  B^2(s) \mu(s)ds - \frac{1}{2}\int_0^t  W_1^2(s)\mu(s) ds -\frac{1}{2}\int_0^t W_2^2(s)\mu(s) ds,
	\end{align*}
	where $B$ is a standard Brownian motion given by $B(t) = \frac{W_1(t)+W_2(t)}{\sqrt{2}}$. 
	In particular, the correlation coefficient of $B$ and $W_i$ for $i \in \{1,2\}$ is 
	$\rho_{B(t), W_1(t)} = \rho_{B(t), W_2(t)} = \frac{1}{\sqrt{2}}.$
\end{lemma}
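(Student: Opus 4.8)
The plan is to recognise that the claimed identity is nothing but the polarisation identity applied pathwise to the product $W_1(s)W_2(s)$, after which we integrate against the finite measure $\mu(s)\,ds$. Concretely, for every $s\ge 0$ and every sample path one has $B^2(s) = \tfrac12\bigl(W_1(s)+W_2(s)\bigr)^2 = \tfrac12 W_1^2(s) + W_1(s)W_2(s) + \tfrac12 W_2^2(s)$, so that $W_1(s)W_2(s) = B^2(s) - \tfrac12 W_1^2(s) - \tfrac12 W_2^2(s)$. Multiplying through by $\mu(s)$ and integrating over $[0,t]$, then using linearity of the Lebesgue integral, yields the stated decomposition. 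The only point needing a word of justification is that each of the three integrals on the right-hand side is finite (almost surely, and in fact in $L^1$), which follows because $\mu$ has finite first moment: by Fubini, $\mathbb{E}\int_0^t W_i^2(s)\mu(s)\,ds = \int_0^t s\mu(s)\,ds < \infty$ (as already recorded in Lemma~\ref{lemma: app.main1}), and likewise for $B$ once we know $B$ is a standard Brownian motion.

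Next I would verify that $B(t) = \bigl(W_1(t)+W_2(t)\bigr)/\sqrt2$ is indeed a standard one-dimensional Brownian motion. Since $W_1$ and $W_2$ are independent standard Brownian motions, $B$ is a continuous process, started at $0$, which is Gaussian with mean zero and covariance $\mathrm{Cov}\bigl(B(s),B(t)\bigr) = \tfrac12\bigl(\mathrm{Cov}(W_1(s),W_1(t)) + \mathrm{Cov}(W_2(s),W_2(t))\bigr) = \tfrac12\bigl(\min(s,t)+\min(s,t)\bigr) = \min(s,t)$; that characterises standard Brownian motion. Equivalently one may invoke L\'evy's characterisation: $B$ is a continuous local martingale with quadratic variation $\langle B\rangle_t = \tfrac12\bigl(\langle W_1\rangle_t + \langle W_2\rangle_t\bigr) = t$, the cross term vanishing by independence $W_1\perp W_2$. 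In particular $\int_0^t B^2(s)\mu(s)\,ds$ is precisely of the form to which Lemma~\ref{lem: app.square} and Lemma~\ref{lemma: app.main1} apply, which is the reason the lemma is useful: it reduces the cross moment $\mathbb{E}[T_{12}^2(\tau)]$ to second moments of integrals $\int_0^\tau W^2(s)\mu(s)\,ds$ for the standard Brownian motions $W \in \{B, W_1, W_2\}$, together with the relevant correlations.

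Finally, the correlation statement is a one-line computation. For $i\in\{1,2\}$, independence of $W_1$ and $W_2$ gives $\mathrm{Cov}\bigl(B(t),W_i(t)\bigr) = \tfrac1{\sqrt2}\bigl(\mathrm{Cov}(W_1(t),W_i(t)) + \mathrm{Cov}(W_2(t),W_i(t))\bigr) = \tfrac{t}{\sqrt2}$, while $\mathrm{Var}(B(t)) = \mathrm{Var}(W_i(t)) = t$, so $\rho_{B(t),W_i(t)} = (t/\sqrt2)/t = 1/\sqrt2$. I do not anticipate any genuine obstacle: the identity is purely algebraic, $B$ being a standard Brownian motion is classical, and the correlation is immediate; the only care required is the integrability bookkeeping that legitimises the rearrangement, and that is supplied verbatim by the finite-first-moment hypothesis on $\mu$ as used in the preceding lemmas. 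The subsequent step of the argument (to be carried out when proving Lemma~\ref{lemma: app.main1}'s companion and completing Theorem~\ref{thm: main_thm}) is to square the decomposition, take expectations, and substitute the expressions from Lemma~\ref{lemma: app.main1}.
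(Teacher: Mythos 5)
Your proof is correct and matches the paper's approach: the paper disposes of this lemma with ``straightforward verification,'' which is exactly the pathwise polarisation identity $W_1(s)W_2(s)=B^2(s)-\tfrac12 W_1^2(s)-\tfrac12 W_2^2(s)$ integrated against $\mu(s)\,ds$, together with the elementary checks that $B$ is a standard Brownian motion and that $\rho_{B(t),W_i(t)}=1/\sqrt{2}$. Your additional integrability remarks are fine but not needed beyond what the paper already assumes.
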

\begin{proof}
	Straightforward verification.
\end{proof}
\begin{lemma} \label{lemma: app.A.4}
	Let $W$ and $B$ be two correlated Brownian motions with correlation coefficients $\rho \in [-1,1]$ then 
	\begin{align}\nonumber \mathbb{E}\left[\int_0^tB^2(s)  \mu(s)  ds\int_0^t W^2(s)  \mu(s) ds \right] =& \left(\int_0^t M(s)ds \right)^2 -2tM(t)\int_0^t M(s)ds \\\nonumber&+ (1 +2\rho^2) t^2M^2(t)
	-8\rho^2  M(t)\int_0^t sM(s) ds \\&+ 4\rho^2 \int_0^t s M^2(s) ds.  \label{eqn: app.first} \end{align}
\end{lemma}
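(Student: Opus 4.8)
The plan is to reduce the mixed second moment on the left-hand side to a combination of quantities that can be handled by It\^{o} calculus, mirroring the structure of the proof of Lemma~\ref{lemma: app.main1}. First I would apply Lemma~\ref{lem: app.square} to both factors: write
\[
\int_0^t B^2(s)\mu(s)\,ds = M(t)B^2(t) - 2\int_0^t M(s)B(s)\,dB(s) - \int_0^t M(s)\,ds,
\]
and the analogous identity for $\int_0^t W^2(s)\mu(s)\,ds$ with $W$ in place of $B$. Expanding the product gives nine terms. The deterministic term $\int_0^t M(s)\,ds$ appearing in each factor is constant, so the cross-terms involving it reduce to first moments already computed in Lemma~\ref{lemma: app.main1} (namely $\mathbb{E}[\int_0^t W^2\mu\,ds] = \int_0^t s\mu(s)\,ds = tM(t) - \int_0^t M(s)\,ds$ after integration by parts), and these produce the $(\int_0^t M(s)\,ds)^2$ and $-2tM(t)\int_0^t M(s)\,ds$ pieces together with part of the $t^2M^2(t)$ term.

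The substantive work is the expectation of the genuinely stochastic products. For $\mathbb{E}[M(t)B^2(t)\cdot M(t)W^2(t)] = M^2(t)\,\mathbb{E}[B^2(t)W^2(t)]$ I would use the fact that $(B(t),W(t))$ is a mean-zero bivariate Gaussian with variances $t$ and correlation $\rho$, so by Isserlis/Wick $\mathbb{E}[B^2(t)W^2(t)] = t^2 + 2\rho^2 t^2 = (1+2\rho^2)t^2$, giving the $(1+2\rho^2)t^2M^2(t)$ term. For the cross-terms of the form $\mathbb{E}[M(t)B^2(t)\cdot(-2)\int_0^t M(s)W(s)\,dW(s)]$ and its mirror image, I would write $B^2(t) = (\rho W(t) + \sqrt{1-\rho^2}\,\widetilde W(t))^2$ where $\widetilde W$ is a Brownian motion independent of $W$, or more directly use $W^2(t) = t + 2\int_0^t W\,dW$ (as in Lemma~\ref{lemma: app.main1}) and exploit $d\langle B, W\rangle_s = \rho\,ds$. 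The key inputs are It\^{o}'s isometry and its polarized version: for adapted $H$, $\mathbb{E}[\int_0^t H_s\,dB_s \int_0^t K_s\,dW_s] = \rho\int_0^t \mathbb{E}[H_s K_s]\,ds$. Applying this to the term $\mathbb{E}[(-2\int_0^t M(s)B(s)\,dB(s))(-2\int_0^t M(s)W(s)\,dW(s))] = 4\rho\int_0^t M^2(s)\,\mathbb{E}[B(s)W(s)]\,ds = 4\rho^2\int_0^t s M^2(s)\,ds$ yields the final term. The remaining cross-term $\mathbb{E}[M(t)B^2(t)\cdot(-2)\int_0^t M(s)W(s)\,dW(s)]$ requires expressing $B^2(t)$ via its stochastic integral representation and matching the $dW$ part: using $B(t) = \rho W(t) + \sqrt{1-\rho^2}\,\widetilde W(t)$ one gets the $dW$-integrand's contribution to be $2\rho M(t)\int_0^t W(s)\,dW(s)$-related, and pairing against $-2\int_0^t M(s)W(s)\,dW(s)$ via isometry produces $-8\rho^2 M(t)\int_0^t sM(s)\,ds$ (the two symmetric cross-terms each contributing half, or equivalently one such term contributing the whole after careful bookkeeping).

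The main obstacle I anticipate is the careful bookkeeping of the stochastic cross-terms: one must correctly track which Brownian motion each It\^{o} integral is taken against, use the representation $B = \rho W + \sqrt{1-\rho^2}\,\widetilde W$ consistently, and repeatedly invoke the polarized It\^{o} isometry so that every $\rho$ and every factor of $M$ lands in the right place; it is easy to drop a factor of $2$ or mis-assign a $\rho$ versus $\rho^2$. A useful sanity check at the end is to set $\rho = 0$, whereupon $B$ and $W$ become independent and the right-hand side should collapse to $\mathbb{E}[\int_0^t B^2\mu\,ds]\,\mathbb{E}[\int_0^t W^2\mu\,ds] = (\int_0^t s\mu(s)\,ds)^2 = (tM(t) - \int_0^t M(s)\,ds)^2$, which indeed matches the $\rho = 0$ specialization of \eqref{eqn: app.first} after expansion; and setting $\rho = 1$ (so $B = W$) should reproduce the second-moment formula of Lemma~\ref{lemma: app.main1}, giving a second independent check.
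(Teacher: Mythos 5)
Your proposal follows essentially the same route as the paper's proof: expand both factors via Lemma \ref{lem: app.square}, compute $\mathbb{E}[B^2(t)W^2(t)]=(1+2\rho^2)t^2$, and handle the stochastic cross-terms with the polarized It\^{o} isometry $\mathbb{E}[\int_0^t H\,dB\int_0^t K\,dW]=\rho\int_0^t\mathbb{E}[H_sK_s]\,ds$, yielding the $4\rho^2\int_0^t sM^2(s)\,ds$ and $-8\rho^2 M(t)\int_0^t sM(s)\,ds$ terms exactly as in the paper (which obtains the fourth-moment term from $W^2(t)=t+2\int_0^t W\,dW$ rather than Wick's theorem, an immaterial difference). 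Your bookkeeping of the contributions and your $\rho=0$ and $\rho=1$ consistency checks are correct, so the proposal is sound.
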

\begin{proof}
	To calculate $\mathbb{E}\left[\int_0^t B^2(s)  \mu(s) ds\int_0^t  W^2(s)  \mu(s) ds \right]$ we employ Lemma \ref{lem: app.square}. Thus, we obtain
	\begin{align*}
	&\mathbb{E}\left[\int_0^t B^2(s)  \mu(s)ds\int_0^t W^2(s) \mu(s) ds \right]  \\=&\mathbb{E}\left[\left(M(t)B^2(t) - 2 \int_0^t  M(s)B(s) dB(s) - \int_0^t M(s)ds \right)\right.\\
	&\left.\left( M(t)W^2(t) - 2 \int_0^t  M(s)W(s) dW(s) - \int_0^t M(s)ds\right)\right]\\
	=& M^2(t)\mathbb{E}\left[B^2(t)W^2(t) \right]-2M(t)\mathbb{E}\left[B^2(t)  \int_0^t  M(s)W(s) dW(s)\right]\\
	& - M(t)\int_0^t M(s)ds\mathbb{E}[B^2(t)] - 2 M(t) \mathbb{E}\left[W^2(t)\int_0^t  M(s)B(s) dB(s)\right] \\
	%& +4 \mathbb{E}\left[\int_0^t  %M(s)B(s) dB(s)\int_0^t  %M(s)W_1(s) dW_1(s)  \right]\\
	%&+ 2\int_0^t M(s) ds %\underbrace{\mathbb{E}\left[\int_0^t%  M(s)B(s) dB(s) %\right]}_{=0}-M(t)\int_0^t %M(s)ds \mathbb{E}[W_1^2(t)]\\
	%& + 2\int_0^t M(s) ds %\underbrace{\mathbb{E}\left[\int_0^t%  M(s)W_1(s) dW_1(s) %\right]}_{=0}+ \left(\int_0^t %M(s)ds \right)^2\\
	=&\left(\int_0^t M(s)ds \right)^2 -2tM(t)\int_0^t M(s)ds + M^2(t)\mathbb{E}\left[B^2(t)W^2(t) \right]\\
	&-2M(t)\mathbb{E}\left[B^2(t)  \int_0^t  M(s)W(s) dW(s)\right]\\&- 2 M(t) \mathbb{E}\left[W^2(t)\int_0^t  M(s)B(s) dB(s)\right] \\
	&+4 \mathbb{E}\left[\int_0^t  M(s)B(s) dB(s)\int_0^t  M(s)W(s) dW(s)  \right].
	\end{align*}
	The first expectation term is 
	\begin{align*}
	\mathbb{E}\left[B^2(t)W^2(t)\right] =&% \frac{1}{2} \left(\mathbb{E}[W_1^4(t)]+ \mathbb{E}[W_1^2(t)]\mathbb{E}[W_2^2(t)]\right)\\=&
	%\frac{1}{2}(3t^2 +t^2)\\=&
	\mathbb{E}\left[ \left(t+ 2 \int_0^t B(s)dB(s)\right)\left( t+ 2 \int_0^t W(s)dW(s)\right) \right] \\=& t^2 + 4\rho\int_0^t \mathbb{E}[B(s)W(s)]ds\\=&
	(1 +2\rho^2) t^2.
	\end{align*}
	In order to determine the other expectation terms we use It\^{o}'s product rule and Fubini's theorem. Thus,
	\begin{align*}
	\mathbb{E}\left[\int_0^t  M(s)B(s) dB(s)\int_0^t  M(s)W(s) dW(s)  \right]=&
	\mathbb{E}\left[ \int_0^t M^2(s)B(s)W(s) \rho ds \right]\\
	=& \rho \int_0^tM^2(s)  \mathbb{E}[B(s)W(s)]ds,\\
	=& \rho^2 \int_0^t s M^2(s) ds.
	\end{align*}
	Since $W^2(t) = t + 2\int_0^tW(s)dW(s) $ we obtain 
	\begin{align*}
	\mathbb{E}\left[W^2(t)\int_0^t  M(s)B(s) dB(s)\right]=& t\mathbb{E}\left[\int_0^t  M(s)B(s) dB(s) \right]\\&+ 2\mathbb{E}\left[\int_0^t W(s)dW(s) \int_0^t M(s)B(s)dB(s)  \right]\\
	=& 2 \mathbb{E}\left[ \int_0^t M(s)W(s)B(s) \rho ds \right] \\
	=&2\rho^2 \int_0^t sM(s) ds .
	\end{align*}
	Similarly, 
	\begin{align*}
	\mathbb{E}\left[B^2(t)  \int_0^t  M(s)W(s) dW(s)\right]=
	2\rho^2\int_0^t sM(s) ds .
	\end{align*}
	Hence, the identity (\ref{eqn: app.first}) holds. 
\end{proof}
\begin{lemma} \label{lemma: app.main2}
	Let $W_1$ and $W_2$ be two independent one-dimensional standard Brownian motions. Then $\int_0^t \mu(s)W_1(s)W_2(s)ds$ has the following second moment
	\begin{align*}
	\mathbb{E}\left[\left( 	\int_0^t W_1(s)W_2(s)  \mu(s) ds\right)^2\right] =& \frac{1}{2}\left(\int_0^t s \mu(s) ds\right)^2 + \frac{1}{2}t^2M^2(t) + 2 \int_0^t s M^2 (s) ds \\ 
	&- \frac{1}{2}\left(\int_0^t M(s) ds\right)^2 - 4M(t)\int_0^t sM(s)ds\\& + t M(t)\int_0^t M(s)ds.
	\end{align*}
\end{lemma}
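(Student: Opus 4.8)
The plan is to reduce the computation entirely to the three preceding lemmas and then collect terms. Set
$U_B=\int_0^t B^2(s)\mu(s)\,ds$ and $U_i=\int_0^t W_i^2(s)\mu(s)\,ds$ for $i\in\{1,2\}$, where $B(s)=\bigl(W_1(s)+W_2(s)\bigr)/\sqrt2$. Lemma \ref{lemma: app.A.3} gives the pathwise identity $\int_0^t W_1(s)W_2(s)\mu(s)\,ds=U_B-\tfrac12U_1-\tfrac12U_2$. Squaring and using linearity of expectation,
\[
\mathbb{E}\!\left[\Bigl(\int_0^t W_1 W_2\,\mu\,ds\Bigr)^{\!2}\right]
=\mathbb{E}[U_B^2]+\tfrac14\mathbb{E}[U_1^2]+\tfrac14\mathbb{E}[U_2^2]-\mathbb{E}[U_BU_1]-\mathbb{E}[U_BU_2]+\tfrac12\mathbb{E}[U_1U_2].
\]

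Next I would evaluate each of the six terms. Since $B$, $W_1$, $W_2$ are each standard one-dimensional Brownian motions, Lemma \ref{lemma: app.main1} gives $\mathbb{E}[U_B^2]=\mathbb{E}[U_1^2]=\mathbb{E}[U_2^2]$, all equal to the explicit expression in that lemma. By Lemma \ref{lemma: app.A.3}, $(B,W_i)$ is a pair of correlated Brownian motions with correlation coefficient $\rho=1/\sqrt2$, so Lemma \ref{lemma: app.A.4} applied with $\rho^2=\tfrac12$ gives $\mathbb{E}[U_BU_1]=\mathbb{E}[U_BU_2]$ in closed form. Finally $U_1$ and $U_2$ are functionals of the independent motions $W_1,W_2$, hence $\mathbb{E}[U_1U_2]=\mathbb{E}[U_1]\,\mathbb{E}[U_2]=\bigl(\int_0^t s\mu(s)\,ds\bigr)^2$, using the first-moment formula of Lemma \ref{lemma: app.main1} twice.

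Substituting these and simplifying, the coefficient of each of the basic quantities $t^2M^2(t)$, $\int_0^t sM^2(s)\,ds$, $(\int_0^t M(s)\,ds)^2$, $M(t)\int_0^t sM(s)\,ds$, $tM(t)\int_0^t M(s)\,ds$ is obtained from $\tfrac32\,\mathbb{E}[U_1^2]-2\,\mathbb{E}[U_BU_1]$, and the leftover term $\tfrac12\bigl(\int_0^t s\mu(s)\,ds\bigr)^2$ comes from $\tfrac12\mathbb{E}[U_1U_2]$; one checks the coefficients reduce to $\tfrac12,\ 2,\ -\tfrac12,\ -4,\ 1$ respectively, which is exactly the claimed identity. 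The argument involves no new ideas beyond It\^o calculus already used for Lemmas \ref{lem: app.square}--\ref{lemma: app.A.4}; the only genuine obstacle is the bookkeeping in the final cancellation, and the one conceptual point requiring care is verifying that Lemma \ref{lemma: app.A.4} legitimately applies to the pair $(B,W_i)$ — i.e.\ that they form correlated Brownian motions in the sense of that lemma, not merely variables with correlated marginals — which is guaranteed by the explicit linear relation $B=(W_1+W_2)/\sqrt2$ from Lemma \ref{lemma: app.A.3}.
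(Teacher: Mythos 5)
Your proposal is correct and follows essentially the same route as the paper: the same decomposition $\int_0^t W_1W_2\,\mu\,ds = U_B - \tfrac12 U_1 - \tfrac12 U_2$ via Lemma \ref{lemma: app.A.3}, the same use of Lemma \ref{lemma: app.main1} for the squared terms and the independent product, and Lemma \ref{lemma: app.A.4} with $\rho^2=\tfrac12$ for the cross terms, reducing to $\tfrac32\,\mathbb{E}[U_B^2] + \tfrac12\bigl(\int_0^t s\mu(s)\,ds\bigr)^2 - 2\,\mathbb{E}[U_B U_1]$ exactly as in the paper's proof. Your coefficient bookkeeping also checks out, so nothing is missing.
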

\begin{proof}
	Let	 $B(t) = \frac{W_1(t)+W_2(t)}{\sqrt{2}}$, clearly it is a standard Brownian motion such that  $\rho_{B(t), W_1(t)} = \rho_{B(t), W_2(t)} = \frac{1}{\sqrt{2}}.$
	Lemma \ref{lemma: app.A.3} yields that
	\begin{align*}
	\int_0^t W_1(s)W_2(s) \mu(s) ds = \int_0^t  B^2(s)  \mu(s)ds - \frac{1}{2}\int_0^t W_1^2(s)  \mu(s)ds -\frac{1}{2}\int_0^t  W_2^2(s)  \mu(s)ds. 
	\end{align*}
	Thus, we obtain 
	\begin{align*}
	\mathbb{E}\left[ \left( 	\int_0^t  W_1(s)W_2(s)  \mu(s)ds \right)^2\right] = & \mathbb{E}\left[\left(\int_0^t B^2(s)  \mu(s)ds\right)^2 \right] +\frac{1}{4}\mathbb{E}\left[\left(\int_0^t  W_1^2(s)  \mu(s)ds\right)^2 \right]\\&+\frac{1}{4}\mathbb{E}\left[\left(\int_0^t  W_2^2(s)  \mu(s)ds\right)^2 \right]\\&-\mathbb{E}\left[\int_0^t  B^2(s)  \mu(s)ds\int_0^t  W_2^2(s)  \mu(s)ds \right]\\&+\frac{1}{2}\mathbb{E}\left[\int_0^t W_1^2(s)  \mu(s)ds \right] \mathbb{E}\left[\int_0^t  W_2^2(s)  \mu(s)ds\right]\\
	&- \mathbb{E}\left[\int_0^t  B^2(s) \mu(s) ds\int_0^t W_1^2(s) \mu(s)  ds \right]
	\\=&
	\frac{3}{2} \mathbb{E}\left[\left(\int_0^t B^2(s)  \mu(s) ds\right)^2 \right] + \frac{1}{2}\left(\int_0^t s \mu(s) ds \right)^2 \\&-2\mathbb{E}\left[\int_0^t  B^2(s) \mu(s)  ds\int_0^t W_1^2(s)  \mu(s) ds \right].
	\end{align*}
	Employing Lemma \ref{lemma: app.main1} and Lemma \ref{lemma: app.A.4} we can conclude that $\mathbb{E}\left[ \left( 	\int_0^t  W_1(s)W_2(s)  \mu(s) ds \right)^2\right]$ has the desired form. 
	\iffalse
	\begin{align*}
	\mathbb{E}\left[ \left( 	\int_0^t \mu(s) W_1(s)W_2(s) ds \right)^2\right]=& \frac{1}{2}\left( \int_0^t s \mu(s) ds\right)^2+\frac{9}{2}t^2 M^2(t)+6 \int_0^t s M^2(t)ds\\&+ \frac{3}{2}\left( \int_0^t M(s) ds\right)^2
	- 12 M(t) \int_0^t s M(s)ds \\&- 3t M(t)\int_0^t M(s)ds -2 \left(\int_0^t M(s) ds \right)^2
	\\&+ 4t M(t)\int_0^t M(s) ds - 4t^2 M^2(t)\\&+ 8 M(t)\int_0^t s M(s)ds - 4 \int_0^t s M^2(t) ds\\
	=& \frac{1}{2}\left(\int_0^t s \mu(s) ds\right)^2 + \frac{1}{2}t^2M^2(t) + 2 \int_0^t s M^2 (s) ds \\ 
	&- \frac{1}{2}\left(\int_0^t M(s) ds\right)^2 - 4M(t)\int_0^t sM(s)ds\\& + t M(t)\int_0^t M(s)ds.
	\end{align*}
	\fi
\end{proof}
\end{appendices}
\medskip
\section*{References}
\addcontentsline{toc}{section}{References}%
\bibliographystyle{unsrt}
\bibliography{MRWRefs}
%\bibliography{MemorizedRW_v4}

\end{document}